\newtheorem{theorem}{Theorem}[section]
\newtheorem{lemma}[theorem]{Lemma}
\newtheorem{proposition}[theorem]{Proposition}
\newtheorem{corollary}[theorem]{Corollary}
\theoremstyle{definition}
\theoremstyle{remark}
\newtheorem{remark}[theorem]{Remark}
\numberwithin{equation}{section}
\begin{document}

\setcounter{page}{1}

\title[seminorm and numerical radius inequalities]
{seminorm and numerical radius inequalities of operators in semi-Hilbertian spaces}

\author[M.S. Moslehian, Q. Xu \MakeLowercase{and} A. Zamani]
{M. S. Moslehian$^1$, Q. Xu$^2$ \MakeLowercase{and} A. Zamani$^{3,*}$}

\address{$^1$Department of Pure Mathematics, Ferdowsi University of Mashhad,
Center of Excellence in Analysis on Algebraic Structures (CEAAS),
P. O. Box 1159, Mashhad 91775, Iran}
\email{moslehian@um.ac.ir, moslehian@yahoo.com}

\address{$^2$Department of Mathematics, Shanghai Normal University, Shanghai 200234, P.R. China}
\email{qingxiang\_xu@126.com}

\address{$^3$Department of Mathematics, Farhangian University, Tehran, Iran}
\email{zamani.ali85@yahoo.com (Corresponding author)}

\subjclass[2010]{Primary 47A05; Secondary 46C05, 47B65, 47A12.}

\keywords{Positive operator, semi-inner product, $A$-numerical radius, inequality.}

\begin{abstract}
Let $A$ be a positive bounded operator on a Hilbert space
$\big(\mathcal{H}, \langle \cdot, \cdot\rangle \big)$.
The semi-inner product ${\langle x, y\rangle}_A := \langle Ax, y\rangle$, $x, y\in\mathcal{H},$
induces a seminorm ${\|\cdot\|}_A$ on $\mathcal{H}$.
Let ${\|T\|}_A,\ w_A(T),$ and $c_A(T)$ denote the $A$-operator seminorm,
the $A$-numerical radius, and the $A$-Crawford number of an operator $T$ in the semi-Hilbertian space
$\big(\mathcal{H}, {\|\cdot\|}_A\big)$, respectively.
In this paper, we present some seminorm inequalities and equalities for semi-Hilbertian space operators.
More precisely, we give some necessary and sufficient conditions for two
orthogonal semi-Hilbertian operators satisfy Pythagoras' equality.
In addition, we derive new upper and lower bounds for the numerical radius
of operators in semi-Hilbertian spaces. In particular, we show that
\begingroup\makeatletter\def\f@size{7}\check@mathfonts
\begin{align*}
\frac{1}{16} {\|TT^{\sharp_{A}} + T^{\sharp_{A}}T\|}^{2}_{A}
+ \frac{1}{16}c_{A}\Big(\big(T^2 + (T^{\sharp_{A}})^2\big)^2\Big)
\leq w^4_{A}(T) \leq
\frac{1}{8} {\|TT^{\sharp_{A}} + T^{\sharp_{A}}T\|}^{2}_{A} + \frac{1}{2}w^2_{A}(T^2),
\end{align*}
\endgroup
where $T^{\sharp_A}$ is a distinguished $A$-adjoint operator of $T$.
Some applications of our inequalities are also provided.
\end{abstract} \maketitle
\section{Introduction}
Let $\big(\mathcal{H}, \langle\cdot, \cdot\rangle\big)$ be a complex Hilbert space equipped with the norm $\|\cdot\|$.
If $\mathcal{M}$ is a linear subspace of $\mathcal{H}$, then $\overline{\mathcal{M}}$ stands for its closure in the
norm topology of $\mathcal{H}$.
We denote the orthogonal projection onto a closed linear subspace $\mathcal{M}$ of $\mathcal{H}$ by $P_{\mathcal{M}}$.
Let $\mathbb{B}(\mathcal{H})$ denote the $C^{\ast}$-algebra of all bounded linear operators on $\mathcal{H}$
and let $\mathbb{B}(\mathcal{H})^{+}$
be the cone of positive operators of $\mathbb{B}(\mathcal{H})$, i.e.,
\begin{align*}
\mathbb{B}(\mathcal{H})^{+}
= \big\{A \in \mathbb{B}(\mathcal{H}):\quad \langle Ax, x\rangle \geq 0 \,\, \mbox{for all} \,\, x\in\mathcal{H}\big\}.
\end{align*}
For every $T\in\mathbb{B}(\mathcal{H})$ its range is denoted by $\mathcal{R}(T)$, its null space by $\mathcal{N}(T)$, and its adjoint by $T^*$.
Any $A\in \mathbb{B}(\mathcal{H})^{+}$ defines a positive semidefinite sesquilinear form:
\begin{align*}
{\langle \cdot, \cdot\rangle}_A: \,\mathcal{H}\times \mathcal{H} \rightarrow \mathbb{C},
\quad {\langle x, y\rangle}_A = \langle Ax, y\rangle.
\end{align*}
We denote by ${\|\cdot\|}_A$ the seminorm induced by ${\langle \cdot, \cdot\rangle}_A$, that is,
${\|x\|}_A = \sqrt{{\langle x, x\rangle}_A}$
for every $x\in\mathcal{H}$.
Observe that ${\|x\|}_A = 0$ if and only if $x\in \mathcal{N}(A)$. Then ${\|\cdot\|}_A$ is a norm if and only if $A$ is one-to-one, and the seminormed
space $(\mathcal{H}, {\|\cdot\|}_A)$ is a complete space if and only if $\mathcal{R}(A)$ is closed in $\mathcal{H}$.

Throughout this paper, we assume that $A\in\mathbb{B}(\mathcal{H})$ is a positive operator.
For $T\in\mathbb{B}(\mathcal{H})$, the quantity of $A$-operator seminorm of $T$ is defined by
${\|T\|}_A = \sup\big\{{\|Tx\|}_A: \,\, {\|x\|}_A =1\big\}$.
Notice that it may happen that ${\|T\|}_A = + \infty$
for some $T\in\mathbb{B}(\mathcal{H})$. For example, let $A$ be the diagonal operator on
the Hilbert space $\ell^2$ given by $Ae_n = \frac{e_n}{n!}$,
where $\{e_n\}$ denotes the canonical basis of $\ell^2$
and consider the left shift operator $T \in \mathbb{B}(\ell^2)$.
From now on we will denote
$\mathbb{B}^{A}(\mathcal{H}) :=\big\{T\in \mathbb{B}(\mathcal{H}): \,\, {\|T\|}_A < \infty\big\}$.
It can be verified that $\mathbb{B}^{A}(\mathcal{H})$
is not a subalgebra of $\mathbb{B}(\mathcal{H})$ in general and ${\|T\|}_A = 0$ if and only if $ATA = 0$.
For $T\in \mathbb{B}(\mathcal{H})$, an operator $R\in \mathbb{B}(\mathcal{H})$
is called an $A$-adjoint operator of $T$ if for every $x, y\in \mathcal{H}$,
we have ${\langle Tx, y\rangle}_A = {\langle x, Ry\rangle}_A$, that is, $AR = T^*A$.
Generally, the existence of an $A$-adjoint operator is not guaranteed.
The set of all operators that admit $A$-adjoints is denoted by $\mathbb{B}_{A}(\mathcal{H})$.
Note that $\mathbb{B}_{A}(\mathcal{H})$ is a subalgebra of $\mathbb{B}(\mathcal{H})$, which is neither closed nor dense
in $\mathbb{B}(\mathcal{H})$. Moreover, the inclusions
$\mathbb{B}_{A}(\mathcal{H}) \subseteq \mathbb{B}^{A}(\mathcal{H}) \subseteq \mathbb{B}(\mathcal{H})$ hold
with equality if $A$ is one-to-one and has a closed range.
If $T\in\mathbb{B}_{A}(\mathcal{H})$, then the ``reduced" solution of the equation $AX = T^*A$ is a distinguished $A$-adjoint
operator of $T$, which is denoted by $T^{\sharp_A}$.
Note that, $T^{\sharp_A} = A^{\dag}T^*A$ in which $A^{\dag}$ is the Moore--Penrose inverse of $A$
and the $A$-adjoint operator $T^{\sharp_A}$ verifies
\begin{align*}
AT^{\sharp_A} = T^*A, \,\, \mathcal{R}(T^{\sharp_A}) \subseteq \overline{\mathcal{R}(A)}
\,\, \mbox{and} \,\, \mathcal{N}(T^{\sharp_A}) = \mathcal{N}(T^*A).
\end{align*}
Recall that $A^{\dag}$ is the unique linear mapping from $\mathcal{R}(A) \oplus \mathcal{R}(A)^{\perp}$
into $\mathcal{H}$ satisfying the ``Moore--Penrose equations":
\begin{align*}
AXA = A, \,\, XAX = X, \,\, XA = P_{\overline{\mathcal{R}(A)}}
\,\, \mbox{and} \,\, AX = P_{\overline{\mathcal{R}(A)}}|_{\mathcal{R}(A) \oplus \mathcal{R}(A)^{\perp}}.
\end{align*}
In general, $A^{\dag} \not \in \mathbb{B}(\mathcal{H})$.
Indeed, $A^{\dag} \in \mathbb{B}(\mathcal{H})$ if and only if $A$ has closed range; see, for example, \cite{M.K.X}.
For $T, S\in\mathbb{B}_{A}(\mathcal{H})$, it is easy to see that ${\|TS\|}_A \leq {\|T\|}_A{\|S\|}_A$
and $(TS)^{\sharp_A} = S^{\sharp_A}T^{\sharp_A}$.
Notice that if $T\in\mathbb{B}_{A}(\mathcal{H})$, then $T^{\sharp_A}\in\mathbb{B}_{A}(\mathcal{H})$,
$(T^{\sharp_A})^{\sharp_A} = P_{\overline{\mathcal{R}(A)}}TP_{\overline{\mathcal{R}(A)}}$,
$\big((T^{\sharp_A})^{\sharp_A}\big)^{\sharp_A} = T^{\sharp_A}$ and so
\begin{align*}
{\|T\|}_A = \sup\big\{|{\langle Tx, y\rangle}_A|:\,\,\, x, y\in \mathcal{H},\, {\|x\|}_A = {\|y\|}_A = 1\big\}.
\end{align*}
An operator $T\in \mathbb{B}(\mathcal{H})$ is called $A$-positive if $AT \in \mathbb{B}(\mathcal{H})^{+}$,
and we write $T {\geq}_{A} 0$. Note that if $T$ is $A$-positive, then
${\|T\|}_A = \sup\big\{{\langle Tx, x\rangle}_A:\,\,\, x\in \mathcal{H},\, {\|x\|}_A = 1\big\}$.
If, in addition, $S {\geq}_{A} T {\geq}_{A} 0$, then ${\|S\|}_A \geq {\|T\|}_A$.
An operator $T\in\mathbb{B}(\mathcal{H})$ is said to be $A$-selfadjoint if $AT$ is selfadjoint,
that is, $AT = T^*A$. Observe that if $T$ is $A$-selfadjoint, then $T\in\mathbb{B}_{A}(\mathcal{H})$.
However, it does not hold, in general, that $T = T^{\sharp_A}$.
More precisely, if $T\in\mathbb{B}_{A}(\mathcal{H})$, then $T = T^{\sharp_A}$ if and only if
$T$ is $A$-selfadjoint and $\mathcal{R}(T) \subseteq \overline{\mathcal{R}(A)}$.
Note that for $T\in\mathbb{B}_{A}(\mathcal{H})$, $T^{\sharp_A}T$ and  $TT^{\sharp_A}$ are $A$-selfadjoint and $A$-positive and so
\begin{align}\label{I.S1.0}
{\|T^{\sharp_A}T\|}_A = {\|TT^{\sharp_A}\|}_A = {\|T\|}^2_A = {\|T^{\sharp_A}\|}^2_A.
\end{align}
An operator $T\in\mathbb{B}_{A}(\mathcal{H})$ is called $A$-normal if $TT^{\sharp_A} = T^{\sharp_A}T$.
It is familiar that every selfadjoint operator is normal. However, an $A$-selfadjoint operator is
not necessarily $A$-normal. For example, consider   operators
$A = \begin{bmatrix}
1 & 1 \\
1 & 1
\end{bmatrix}$
and
$T = \begin{bmatrix}
2 & 2 \\
0 & 0
\end{bmatrix}$.
Then simple computations show that $T$ is $A$-selfadjoint and
$TT^{\sharp_A} = \begin{bmatrix}
4 & 4 \\
0 & 0
\end{bmatrix} \neq \begin{bmatrix}
2 & 2 \\
2 & 2
\end{bmatrix} = T^{\sharp_A}T$.

The $A$-numerical radius and the $A$-Crawford number of $T\in\mathbb{B}(\mathcal{H})$ are defined by
\begin{align*}
w_A(T) = \sup\Big\{\big|{\langle Tx, x\rangle}_A\big|: \,\,\, x\in \mathcal{H},\, {\|x\|}_A = 1\Big\}
\end{align*}
and
\begin{align*}
c_A(T) = \inf \big\{|{\langle Tx, x\rangle}_A|:\,\,\, x\in\mathcal{H},\,{\|x\|}_A =1\big\},
\end{align*}
respectively (see \cite{Ba.Ka.Ah, Z.3} and the references therein).
Notice that it may happen that $w_A(T) = + \infty$ for some $T\in\mathbb{B}(\mathcal{H})$.
Indeed, one can take
$A = \begin{bmatrix}
1 & 0 \\
0 & 0
\end{bmatrix}$ and
$T = \begin{bmatrix}
0 & 1 \\
1 & 0
\end{bmatrix}$.
\begin{remark}
Let $A = \begin{bmatrix}
1 & 0 \\
0 & 0
\end{bmatrix}$ and
$T = \begin{bmatrix}
1 & 0 \\
0 & 2
\end{bmatrix}$.
For $n\in \mathbb{N}$, let $A_n = \begin{bmatrix}
1 + \frac{1}{n} & 0 \\
0 & \frac{1}{n}
\end{bmatrix}$.
It is easy to see that $w_A(T) = 1$ and $w_{A_n}(T) = 2$ for every $n\in \mathbb{N}$.
Thus $\displaystyle{\lim_{n\rightarrow +\infty}}w_{A_n}(T) \neq w_A(T)$.
This example shows a nontrivial generalization from the identity operator
to a general positive semidefinite operator $A$.
\end{remark}
It has recently been shown in \cite[Theorem 2.5]{Z.3} that if $T\in\mathbb{B}_{A}(\mathcal{H})$, then
\begin{align}\label{I.S1.2}
w_A(T) = \displaystyle{\sup_{\theta \in \mathbb{R}}}
{\left\|\frac{e^{i\theta}T + (e^{i\theta}T)^{\sharp_A}}{2}\right\|}_A.
\end{align}
Further, it is known that $w_A(\cdot)$ defines a seminorm
on $\mathbb{B}_{A}(\mathcal{H})$, and that for every $T\in \mathbb{B}_{A}(\mathcal{H})$,
\begin{align}\label{I.S1.1}
\frac{1}{2}{\|T\|}_{A} \leq w_A(T)\leq {\|T\|}_{A}.
\end{align}
Moreover, it is known that if $T$ is $A$-selfadjoint (or $A$-normal), then $w_A(T) = {\|T\|}_{A}$.
For proofs and more facts about $A$-numerical radius of operators, we refer the reader to \cite{Ba.Ka.Ah, Z.3}.
Some other related topics can be found in \cite{A.K.1, A.K.2, Ar.Co.Go, B.F.O, D.1, Fo.Go, H.K.S.1, H.K.S.2, K.M.Y, Wi.Se.Su, M.S, Su}.

In Section 2, we discuss some useful seminorm inequalities and equalities for semi-Hilbertian space operators.
First, we present some refinements of the triangle inequality in $\mathbb{B}_{A}(\mathcal{H})$.
Then, for $T, S \in \mathbb{B}_{A}(\mathcal{H})$, we characterize the equality ${\|T + S\|}_{A} = {\|T\|}_{A} + {\|S\|}_{A}$.
We also give some necessary and sufficient conditions for two
orthogonal semi-Hilbertian operators to satisfy Pythagoras' equality.
In addition, we prove that ${\|T + S\|}_{A} = 2\max\big\{{\|T\|}_{A}, {\|S\|}_{A}\big\}$
if and only if $w_{A}(S^{\sharp_{A}}T) = \max\big\{{\|T\|}^2_{A}, {\|S\|}^2_{A}\big\}$.

In Section 3, we derive several $A$-numerical radius inequalities for semi-Hilbertian space operators.
In particular, we obtain some refinements on the inequalities (\ref{I.S1.1}).
Moreover, for $T\in \mathbb{B}_{A}(\mathcal{H})$, we show that
\begin{align*}
w_{A}(T^2) \leq w^2_{A}(T) \leq w_{A}(T^2) + \frac{1}{2} \min\Big\{{\|T - T^{\sharp_{A}}\|}^2_{A}, {\|T + T^{\sharp_{A}}\|}^2_{A}\Big\}.
\end{align*}
Several applications of our inequalities are also provided.
As far as we know, Theorems \ref{T.3}, \ref{T.5}, \ref{T.6}, \ref{T.9}, and \ref{T.13}
are new even in the case that the underlying operator $A$ is the identity operator.
In the case that $A$ is only positive semidefinite,
some improvements of \cite[Theorems 2.10, 2.11]{Z.3} have been made; see Theorems \ref{T.8} and \ref{T.9}.
\section{seminorm inequalities and equalities for semi-Hilbertian space operators}
We start this section with a refinement of the triangle inequality for semi-Hilbertian space operators as follows.
\begin{lemma}\label{L.2}
Let $T, S\in\mathbb{B}_{A}(\mathcal{H})$. Then
\begin{align*}
{\|T + S\|}_{A} \leq 2\int_0^1 {\big\|tT + (1 - t)S\big\|}_{A}dt \leq {\|T\|}_{A} + {\|S\|}_{A}.
\end{align*}
\end{lemma}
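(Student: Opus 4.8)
The plan is to exploit the convexity of the seminorm $\|\cdot\|_A$ together with a symmetrization trick, so that the displayed chain of two inequalities falls out from a single elementary observation about the integrand $g(t) := \|tT + (1-t)S\|_A$. The key structural fact is that $g$ is a convex function of $t$ on $[0,1]$ (since $\|\cdot\|_A$ is a seminorm and the map $t \mapsto tT + (1-t)S$ is affine), and that it enjoys the symmetry $g(t) = \|(1-t)S + tT\|_A$, which relates its values at $t$ and $1-t$ to the two endpoints in a controlled way.

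\medskip

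For the \emph{upper} inequality $2\int_0^1 g(t)\,dt \le \|T\|_A + \|S\|_A$, I would bound the integrand pointwise. By the triangle inequality for the seminorm, $g(t) = \|tT + (1-t)S\|_A \le t\|T\|_A + (1-t)\|S\|_A$ for each $t \in [0,1]$. Integrating this affine upper bound over $[0,1]$ gives
\begin{align*}
\int_0^1 g(t)\,dt \le \int_0^1 \big(t\|T\|_A + (1-t)\|S\|_A\big)\,dt = \tfrac{1}{2}\|T\|_A + \tfrac{1}{2}\|S\|_A,
\end{align*}
and multiplying by $2$ yields the right-hand inequality. This half is essentially immediate.

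\medskip

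For the \emph{lower} inequality $\|T + S\|_A \le 2\int_0^1 g(t)\,dt$, the idea is to use the symmetry of the integral under $t \mapsto 1-t$ to pair up the integrand with its reflection. Writing $\int_0^1 g(t)\,dt = \int_0^1 g(1-t)\,dt$ and averaging, I would observe that
\begin{align*}
2\int_0^1 g(t)\,dt = \int_0^1 \big(g(t) + g(1-t)\big)\,dt = \int_0^1 \big(\|tT + (1-t)S\|_A + \|(1-t)T + tS\|_A\big)\,dt.
\end{align*}
For each fixed $t$, the triangle inequality gives $\|tT + (1-t)S\|_A + \|(1-t)T + tS\|_A \ge \|(tT + (1-t)S) + ((1-t)T + tS)\|_A = \|T + S\|_A$, since the two arguments sum to $T + S$ regardless of $t$. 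Integrating this pointwise lower bound over $[0,1]$ and using that the constant $\|T + S\|_A$ integrates to itself produces $2\int_0^1 g(t)\,dt \ge \|T + S\|_A$, as required.

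\medskip

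I do not expect a genuine obstacle here: both inequalities reduce to applying the triangle inequality for $\|\cdot\|_A$ inside the integral, once the symmetrization $t \mapsto 1-t$ is used for the lower bound. The only points requiring minor care are the measurability and integrability of $t \mapsto g(t)$ (which follow since $g$ is convex, hence continuous, on $[0,1]$, as $T, S \in \mathbb{B}_A(\mathcal{H})$ guarantees the finiteness of all the seminorms involved) and the bookkeeping of the change of variables. The heart of the argument is recognizing that the midpoint-type averaging forces $T + S$ to appear as the sum of the two reflected integrands.
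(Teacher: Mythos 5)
Your proof is correct and is essentially the paper's argument: the paper applies the Hermite--Hadamard inequality to the convex function $f(t)=\|tT+(1-t)S\|_A$, and your two steps (the pointwise bound $f(t)\le t\|T\|_A+(1-t)\|S\|_A$ for the upper half, and the symmetrization $f(t)+f(1-t)\ge 2f(1/2)=\|T+S\|_A$ for the lower half) are exactly the standard proofs of the two halves of Hermite--Hadamard unwound in this special case. The only difference is that you reprove the inequality inline via the triangle inequality instead of citing it, which is a cosmetic rather than a substantive divergence.
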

\begin{proof}
Let $f(t) : = {\big\|tT + (1 - t)S\big\|}_{A}$ for $t\in\mathbb{R}$.
It is easy to see that the function $f: \mathbb{R} \rightarrow \mathbb{R}$ is convex,
and so by the Hermite--Hadamard inequality (see, e.g., \cite[p. 137]{P.P.T}), we have
\begin{align*}
f\left(\frac{0+1}{2}\right) \leq \frac{1}{1-0}\int_0^1 f(t)dt \leq \frac{f(0) + f(1)}{2}.
\end{align*}
Thus
\begin{align*}
{\left\|\frac{1}{2}T + \frac{1}{2}S\right\|}_{A} \leq \int_0^1 {\big\|tT + (1 - t)S\big\|}_{A}dt
\leq \frac{{\|S\|}_{A} + {\|T\|}_{A}}{2},
\end{align*}
and hence
\begin{align*}
{\|T + S\|}_{A} \leq 2\int_0^1 {\big\|tT + (1 - t)S\big\|}_{A}dt \leq {\|T\|}_{A} + {\|S\|}_{A}.
\end{align*}
\end{proof}
\begin{remark}\label{R.2.1}
The following example shows that the inequality in Lemma \ref{L.2} is a nontrivial improvement.
Consider $A = \begin{bmatrix}
1 & 0 \\
0 & 2
\end{bmatrix}$,
$T = \begin{bmatrix}
1 & 0 \\
0 & 0
\end{bmatrix}$, and
$S = \begin{bmatrix}
0 & 0 \\
1 & 0
\end{bmatrix}$.
It is easy to see that ${\|T\|}_{A} = 1, {\|S\|}_{A} = \sqrt{2}, {\|T + S\|}_{A} = \sqrt{3}$, and
\begin{align*}
\int_0^1 {\big\|tT + (1 - t)S\big\|}_{A}dt = \int_0^1 \sqrt{3t^2 -4t +2}dt = 0.98538.
\end{align*}
Therefore,
\begin{align*}
{\|T + S\|}_{A} \simeq 1.73 < 2\int_0^1 {\big\|tT + (1 - t)S\big\|}_{A}dt\simeq 1.97 < {\|T\|}_{A} + {\|S\|}_{A} \simeq 2.41.
\end{align*}
\end{remark}
Now we apply the above result to obtain an improvement of the second inequality in (\ref{I.S1.1}).
\begin{theorem}\label{T.3}
Let $T\in\mathbb{B}_{A}(\mathcal{H})$. Then
\begin{align*}
w_{A}(T) \leq
\displaystyle{\sup_{\theta \in \mathbb{R}}} \int_0^1 {\big\|te^{i\theta}T + (1 - t)T^{\sharp_{A}}\big\|}_{A}dt
\leq {\|T\|}_{A}.
\end{align*}
\end{theorem}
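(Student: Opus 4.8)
The plan is to combine the rotational representation of the $A$-numerical radius in (\ref{I.S1.2}) with the refined triangle inequality of Lemma \ref{L.2}, applying the latter not to the pair $T,\,T^{\sharp_{A}}$ directly but to the rotated pair $e^{i\theta}T$ and its $A$-adjoint.

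First I would record the elementary fact that the map $\sharp_{A}$ is conjugate-homogeneous, so that $(e^{i\theta}T)^{\sharp_{A}}=e^{-i\theta}T^{\sharp_{A}}$; this is immediate from $T^{\sharp_{A}}=A^{\dag}T^{*}A$. Hence the operator inside the supremum in (\ref{I.S1.2}) is exactly $\tfrac12\big(e^{i\theta}T+e^{-i\theta}T^{\sharp_{A}}\big)$.

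Next, for each fixed $\theta\in\mathbb{R}$ I would apply Lemma \ref{L.2} with $S_{1}=e^{i\theta}T$ and $S_{2}=e^{-i\theta}T^{\sharp_{A}}$, and divide the resulting chain by $2$ to obtain
\[
\Big\|\tfrac{e^{i\theta}T+e^{-i\theta}T^{\sharp_{A}}}{2}\Big\|_{A}
\le \int_0^1 \big\|t e^{i\theta}T+(1-t)e^{-i\theta}T^{\sharp_{A}}\big\|_{A}\,dt
\le \frac{\|e^{i\theta}T\|_{A}+\|e^{-i\theta}T^{\sharp_{A}}\|_{A}}{2}.
\]
The right-hand side equals $\|T\|_{A}$, since $\|\cdot\|_{A}$ is unchanged under multiplication by a unimodular scalar and $\|T^{\sharp_{A}}\|_{A}=\|T\|_{A}$ by (\ref{I.S1.0}).

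The one non-cosmetic step is to match the integrand to the form stated in the theorem. For this I would again use unimodular invariance to pull the factor $e^{-i\theta}$ out of the integrand, rewriting $\big\|t e^{i\theta}T+(1-t)e^{-i\theta}T^{\sharp_{A}}\big\|_{A}=\big\|t e^{2i\theta}T+(1-t)T^{\sharp_{A}}\big\|_{A}$. Taking the supremum over $\theta$ across the entire chain, the left-most quantity becomes $w_{A}(T)$ by (\ref{I.S1.2}), the right-most stays $\|T\|_{A}$, and for the middle term the substitution $\phi=2\theta$ — a surjection of $\mathbb{R}$ onto itself — turns $\sup_{\theta}\int_0^1\|t e^{2i\theta}T+(1-t)T^{\sharp_{A}}\|_{A}\,dt$ into $\sup_{\theta}\int_0^1\|t e^{i\theta}T+(1-t)T^{\sharp_{A}}\|_{A}\,dt$, which is precisely the claimed middle expression. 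I expect the only point requiring care to be this final rewriting, and in particular verifying that doubling the angle does not shrink the range over which the supremum is taken.
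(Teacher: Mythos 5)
Your proposal is correct and follows essentially the same route as the paper: both combine Lemma \ref{L.2} with the representation (\ref{I.S1.2}), using $(e^{i\theta}T)^{\sharp_A}=e^{-i\theta}T^{\sharp_A}$ and unimodular invariance of ${\|\cdot\|}_A$ to put the integrand in the stated form. The only difference is cosmetic: the paper applies the lemma to $e^{i\theta/2}T$ so that the doubled angle appears immediately, whereas you apply it to $e^{i\theta}T$ and reparametrize $\phi=2\theta$ at the end; both handle the angle-doubling correctly.
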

\begin{proof}
Let $\theta \in \mathbb{R}$. Applying Lemma \ref{L.2} with $T:=\frac{e^{i\frac{\theta}{2}}T}{2}$
and $S:=\frac{(e^{i\frac{\theta}{2}}T)^{\sharp_{A}}}{2}$, we get
\begingroup\makeatletter\def\f@size{10}\check@mathfonts
\begin{align*}
{\left\|\frac{e^{i\frac{\theta}{2}}T}{2} + \frac{(e^{i\frac{\theta}{2}}T)^{\sharp_{A}}}{2}\right\|}_{A}
\leq 2\int_0^1 {\left\|t\frac{e^{i\frac{\theta}{2}}T}{2}
+ (1 - t)\frac{(e^{i\frac{\theta}{2}}T)^{\sharp_{A}}}{2}\right\|}_{A}dt
\leq {\left\|\frac{e^{i\frac{\theta}{2}}T}{2}\right\|}_{A}
+ {\left\|\frac{(e^{i\frac{\theta}{2}}T)^{\sharp_{A}}}{2}\right\|}_{A}.
\end{align*}
\endgroup
Since
${\big\|te^{i\frac{\theta}{2}}T + (1 - t)(e^{i\frac{\theta}{2}}T)^{\sharp_{A}}\big\|}_{A}
= {\big\|te^{i\theta}T + (1 - t)T^{\sharp_{A}}\big\|}_{A}$
and ${\|(e^{i\frac{\theta}{2}}T)^{\sharp_{A}}\|}_{A} = {\|T\|}_{A}$ by (\ref{I.S1.0}), the above double inequality gives
\begin{align}\label{I.1.T.3}
{\left\|\frac{e^{i\frac{\theta}{2}}T + (e^{i\frac{\theta}{2}}T)^{\sharp_{A}}}{2}\right\|}_{A}
\leq \int_0^1 {\big\|te^{i\theta}T + (1 - t)T^{\sharp_{A}}\big\|}_{A}dt
\leq {\|T\|}_{A}.
\end{align}
Taking the supremum over $\theta \in \mathbb{R}$ in (\ref{I.1.T.3}), we deduce that
\begin{align}\label{I.2.T.3}
\displaystyle{\sup_{\theta \in \mathbb{R}}}{\left\|\frac{e^{i\frac{\theta}{2}}T
+ (e^{i\frac{\theta}{2}}T)^{\sharp_{A}}}{2}\right\|}_{A} \leq
\displaystyle{\sup_{\theta \in \mathbb{R}}} \int_0^1 {\big\|te^{i\theta}T + (1 - t)T^{\sharp_{A}}\big\|}_{A}dt
\leq {\|T\|}_{A}.
\end{align}
Finally, by (\ref{I.S1.2}) and (\ref{I.2.T.3}), we conclude that
\begin{align*}
w_{A}(T) \leq
\displaystyle{\sup_{\theta \in \mathbb{R}}} \int_0^1 {\big\|te^{i\theta}T + (1 - t)T^{\sharp_{A}}\big\|}_{A}dt
\leq {\|T\|}_{A}.
\end{align*}
\end{proof}
In the following theorem, we give a necessary and sufficient condition for the equality
${\|T + S\|}_{A} = {\|T\|}_{A} + {\|S\|}_{A}$ in $\mathbb{B}_{A}(\mathcal{H})$.
We use some ideas of \cite[Theorem 2.1]{B.B}.
\begin{theorem}\label{T.4}
Let $T, S\in\mathbb{B}_{A}(\mathcal{H})$. Then the following conditions are equivalent:
\begin{itemize}
\item[(i)] ${\|T + S\|}_{A} = {\|T\|}_{A} + {\|S\|}_{A}$.
\item[(ii)] There exists a sequence of $A$-unit vectors $\{x_n\}$ in $\mathcal{H}$ such that
\begin{align*}
\displaystyle{\lim_{n\rightarrow +\infty}}{\langle Tx_n, Sx_n\rangle}_{A} = {\|T\|}_{A}\,{\|S\|}_{A}.
\end{align*}
\end{itemize}
\end{theorem}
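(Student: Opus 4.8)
The plan is to establish the two implications separately, relying only on three elementary facts for the semi-inner product: the Cauchy--Schwarz inequality $|{\langle u, v\rangle}_A| \leq {\|u\|}_A{\|v\|}_A$, the bound ${\|Tx\|}_A \leq {\|T\|}_A{\|x\|}_A$, and the polarization-type expansion ${\|u + v\|}_A^2 = {\|u\|}_A^2 + 2\,\mathrm{Re}\,{\langle u, v\rangle}_A + {\|v\|}_A^2$. At the outset I would dispose of the degenerate case: if ${\|T\|}_A = 0$, then ${\|Tx\|}_A \leq {\|T\|}_A{\|x\|}_A = 0$ for every $x$, so both sides of (i) equal ${\|S\|}_A$ and ${\langle Tx_n, Sx_n\rangle}_A = 0 = {\|T\|}_A{\|S\|}_A$ for any sequence of $A$-unit vectors; hence (i) and (ii) hold simultaneously and trivially. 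The same applies if ${\|S\|}_A = 0$, so one may assume ${\|T\|}_A > 0$ and ${\|S\|}_A > 0$.

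For the implication (ii) $\Rightarrow$ (i), suppose $\{x_n\}$ is a sequence of $A$-unit vectors with ${\langle Tx_n, Sx_n\rangle}_A \to {\|T\|}_A{\|S\|}_A$. First I would note that $|{\langle Tx_n, Sx_n\rangle}_A| \leq {\|Tx_n\|}_A{\|Sx_n\|}_A \leq {\|T\|}_A{\|S\|}_A$; since the left-most modulus tends to ${\|T\|}_A{\|S\|}_A$, the product ${\|Tx_n\|}_A{\|Sx_n\|}_A$ is squeezed to the same value, forcing ${\|Tx_n\|}_A \to {\|T\|}_A$ and ${\|Sx_n\|}_A \to {\|S\|}_A$. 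Expanding then gives ${\|(T+S)x_n\|}_A^2 = {\|Tx_n\|}_A^2 + 2\,\mathrm{Re}\,{\langle Tx_n, Sx_n\rangle}_A + {\|Sx_n\|}_A^2 \to \big({\|T\|}_A + {\|S\|}_A\big)^2$, so ${\|T + S\|}_A \geq {\|T\|}_A + {\|S\|}_A$; combined with the triangle inequality this yields (i).

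For the converse (i) $\Rightarrow$ (ii), I would use the definition of the $A$-operator seminorm as a supremum to pick $A$-unit vectors $\{x_n\}$ with ${\|(T+S)x_n\|}_A \to {\|T+S\|}_A = {\|T\|}_A + {\|S\|}_A$. The chain ${\|(T+S)x_n\|}_A \leq {\|Tx_n\|}_A + {\|Sx_n\|}_A \leq {\|T\|}_A + {\|S\|}_A$, together with the individual bounds ${\|Tx_n\|}_A \leq {\|T\|}_A$ and ${\|Sx_n\|}_A \leq {\|S\|}_A$, shows upon passing to the limit that both ${\|Tx_n\|}_A \to {\|T\|}_A$ and ${\|Sx_n\|}_A \to {\|S\|}_A$. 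Substituting into the expansion of ${\|(T+S)x_n\|}_A^2$ forces $\mathrm{Re}\,{\langle Tx_n, Sx_n\rangle}_A \to {\|T\|}_A{\|S\|}_A$.

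The hard part, and the only genuinely delicate point, is that the expansion controls only the \emph{real} part of ${\langle Tx_n, Sx_n\rangle}_A$, so I must still rule out a surviving imaginary component. Writing ${\langle Tx_n, Sx_n\rangle}_A = a_n + i b_n$ and setting $M := {\|T\|}_A{\|S\|}_A$, I have $a_n \to M$ from the previous step, while Cauchy--Schwarz gives $a_n^2 + b_n^2 = |{\langle Tx_n, Sx_n\rangle}_A|^2 \leq {\|Tx_n\|}_A^2{\|Sx_n\|}_A^2 \leq M^2$. Hence $b_n^2 \leq M^2 - a_n^2 \to 0$, so $b_n \to 0$ and therefore ${\langle Tx_n, Sx_n\rangle}_A \to M = {\|T\|}_A{\|S\|}_A$, which is precisely (ii). I expect the modulus-squeeze in this last step to be the crux: it is exactly the maximality of the real part, pinched against the Cauchy--Schwarz ceiling, that kills the imaginary part.
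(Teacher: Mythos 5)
Your proof is correct and follows essentially the same route as the paper's: choose an approximating sequence of $A$-unit vectors, squeeze out ${\|Tx_n\|}_A \to {\|T\|}_A$ and ${\|Sx_n\|}_A \to {\|S\|}_A$, expand ${\|(T+S)x_n\|}_A^2$, and pinch the imaginary part of ${\langle Tx_n, Sx_n\rangle}_A$ against the Cauchy--Schwarz bound. The only (harmless) divergence is in (ii)$\Rightarrow$(i), where you recover the norm limits by a direct product squeeze while the paper runs an additive chain through $S^{\sharp_A}T$ --- a detour it takes mainly to extract $\lim_n {\|S^{\sharp_A}Tx_n\|}_A = {\|S\|}_A{\|T\|}_A$ for reuse in Corollary \ref{C.4.5}.
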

\begin{proof}
(i)$\Rightarrow$(ii) Let ${\|T + S\|}_{A} = {\|T\|}_{A} + {\|S\|}_{A}$.
Then there exists a sequence of $A$-unit vectors $\{x_n\}$ in $\mathcal{H}$ such that
\begin{align}\label{I.1.T.4}
\displaystyle{\lim_{n\rightarrow +\infty}}{\|Tx_n + Sx_n\|}_{A} = {\|T + S\|}_{A}.
\end{align}
For every $n\in \mathbb{N}$, we have
\begin{align*}
{\|Tx_n + Sx_n\|}_{A}\leq {\|Tx_n\|}_{A} + {\|Sx_n\|}_{A} \leq {\|Tx_n\|}_{A} + {\|S\|}_{A} \leq {\|T\|}_{A} + {\|S\|}_{A},
\end{align*}
whence
\begin{align*}
\displaystyle{\lim_{n\rightarrow +\infty}}\big({\|Tx_n\|}_{A} + {\|S\|}_{A}\big) = {\|T\|}_{A} + {\|S\|}_{A}.
\end{align*}
From this, we conclude that
\begin{align}\label{I.2.T.4.1}
\displaystyle{\lim_{n\rightarrow +\infty}}{\|Tx_n\|}_{A} = {\|T\|}_{A}.
\end{align}
Similarly, we obtain
\begin{align}\label{I.2.T.4.2}
\displaystyle{\lim_{n\rightarrow +\infty}}{\|Sx_n\|}_{A} = {\|S\|}_{A}.
\end{align}
Since
\begin{align*}
{\|Tx_n + Sx_n\|}^2_{A} = {\|Tx_n\|}^2_{A} + 2\mbox{Re}{\langle Tx_n, Sx_n\rangle}_{A} + {\|Sx_n\|}^2_{A}
\end{align*}
for every $n\in \mathbb{N}$, from (\ref{I.1.T.4}), (\ref{I.2.T.4.1}), and (\ref{I.2.T.4.2}), we obtain
\begin{align}\label{I.3.T.4}
\displaystyle{\lim_{n\rightarrow +\infty}}\mbox{Re}{\langle Tx_n, Sx_n\rangle}_{A} = {\|T\|}_{A}\,{\|S\|}_{A}.
\end{align}
In addition, for every $n\in \mathbb{N}$, we have
\begingroup\makeatletter\def\f@size{10}\check@mathfonts
\begin{align*}
\mbox{Re}^2{\langle Tx_n, Sx_n\rangle}_{A} + \mbox{Im}^2{\langle Tx_n, Sx_n\rangle}_{A}
= |{\langle Tx_n, Sx_n\rangle}_{A}|^2 \leq {\|T\|}_{A}\,{\|S\|}_{A},
\end{align*}
\endgroup
and so by (\ref{I.3.T.4}), we conclude that
$\displaystyle{\lim_{n\rightarrow +\infty}}\mbox{Im}{\langle Tx_n, Sx_n\rangle}_{A} = 0$.
It follows from (\ref{I.3.T.4}) that
\begin{align*}
\displaystyle{\lim_{n\rightarrow +\infty}}{\langle Tx_n, Sx_n\rangle}_{A} = {\|T\|}_{A}\,{\|S\|}_{A}.
\end{align*}
(ii)$\Rightarrow$(i) Suppose that there exists a sequence of $A$-unit vectors $\{x_n\}$ in $\mathcal{H}$ such that
\begin{align*}
\displaystyle{\lim_{n\rightarrow +\infty}}{\langle Tx_n, Sx_n\rangle}_{A} = {\|T\|}_{A}\,{\|S\|}_{A}.
\end{align*}
Hence $\displaystyle{\lim_{n\rightarrow +\infty}}\mbox{Re}{\langle Tx_n, Sx_n\rangle}_{A} = {\|T\|}_{A}\,{\|S\|}_{A}$.
Since
\begin{align*}
{\|T\|}^2_{A} + 2|{\langle Tx_n, Sx_n\rangle}_{A}| + {\|S\|}^2_{A}
& = {\|T\|}^2_{A} + 2|{\langle S^{\sharp_{A}}Tx_n, x_n\rangle}_{A}| + {\|S\|}^2_{A}
\\& \leq {\|T\|}^2_{A} + 2{\|S^{\sharp_{A}}Tx_n\|}_{A} + {\|S\|}^2_{A}
\\& \leq {\|T\|}^2_{A} + 2{\|S^{\sharp_{A}}\|}_{A}{\|Tx_n\|}_{A} + {\|S\|}^2_{A}
\\& \leq {\|T\|}^2_{A} + 2{\|S\|}_{A}{\|T\|}_{A} + {\|S\|}^2_{A}
= ({\|T\|}_{A} + {\|S\|}_{A})^2
\end{align*}
for every $n\in \mathbb{N}$, we have $\displaystyle{\lim_{n\rightarrow +\infty}}{\|Tx_n\|}_{A} = {\|T\|}_{A}$ and
\begin{align}\label{I.3.T.3.9}
\displaystyle{\lim_{n\rightarrow +\infty}}{\|S^{\sharp_{A}}Tx_n\|}_{A} = {\|S\|}_{A}\,{\|T\|}_{A}.
\end{align}
By a similar argument, we get
$\displaystyle{\lim_{n\rightarrow +\infty}}{\|Sx_n\|}_{A} = {\|S\|}_{A}$.
Thus,
\begin{align*}
({\|T\|}_{A} + {\|S\|}_{A})^2 &= \displaystyle{\lim_{n\rightarrow +\infty}}{\|Tx_n\|}^2_{A}
+ 2\displaystyle{\lim_{n\rightarrow +\infty}}\mbox{Re}{\langle Tx_n, Sx_n\rangle}_{A}
+ \displaystyle{\lim_{n\rightarrow +\infty}}{\|Sx_n\|}^2_{A}
\\& = \displaystyle{\lim_{n\rightarrow +\infty}}{\|(T + S)x_n\|}^2_{A}
\leq {\|T + S\|}^2_{A}
\leq ({\|T\|}_{A} + {\|S\|}_{A})^2.
\end{align*}
Hence ${\|T + S\|}_{A} = {\|T\|}_{A} + {\|S\|}_{A}$.
\end{proof}
As a consequence of Theorem \ref{T.4}, we have the following result.
\begin{corollary}\label{C.4.5}
Let $T, S\in\mathbb{B}_{A}(\mathcal{H})$ such that $S^{\sharp_{A}}T$
is $A$-positive. Then the following conditions are equivalent:
\begin{itemize}
\item[(i)] ${\|T + S\|}_{A} = {\|T\|}_{A} + {\|S\|}_{A}$.
\item[(ii)] ${\|S^{\sharp_{A}}T\|}_{A} = {\|S\|}_{A}\,{\|T\|}_{A}$.
\end{itemize}
\end{corollary}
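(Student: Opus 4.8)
The plan is to reduce everything to Theorem \ref{T.4} via the pointwise identity $\langle Tx, Sx\rangle_A = \langle S^{\sharp_A}Tx, x\rangle_A$, valid for all $x \in \mathcal{H}$, which follows from the defining property of the $A$-adjoint together with the conjugate symmetry of $\langle \cdot, \cdot\rangle_A$. Indeed, applying the $A$-adjoint relation to $S$ gives $\langle Sx, Tx\rangle_A = \langle x, S^{\sharp_A}Tx\rangle_A$, and conjugating yields the claimed identity. Since $S^{\sharp_A}T$ is assumed $A$-positive, the scalar $\langle S^{\sharp_A}Tx, x\rangle_A$ is real and nonnegative for every $x$, and by the supremum formula for the $A$-seminorm of an $A$-positive operator recalled in the introduction,
\[
\|S^{\sharp_A}T\|_A = \sup\{\langle S^{\sharp_A}Tx, x\rangle_A : \|x\|_A = 1\} = \sup\{\langle Tx, Sx\rangle_A : \|x\|_A = 1\}.
\]

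For the implication (ii)$\Rightarrow$(i), I would start from $\|S^{\sharp_A}T\|_A = \|S\|_A\|T\|_A$ and use the above supremum formula to extract a sequence of $A$-unit vectors $\{x_n\}$ with $\langle S^{\sharp_A}Tx_n, x_n\rangle_A \to \|S\|_A\|T\|_A$. Rewriting the left-hand side as $\langle Tx_n, Sx_n\rangle_A$ produces exactly condition (ii) of Theorem \ref{T.4}, which then delivers ${\|T+S\|}_A = {\|T\|}_A + {\|S\|}_A$.

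For the converse (i)$\Rightarrow$(ii), Theorem \ref{T.4} furnishes a sequence of $A$-unit vectors $\{x_n\}$ with $\langle Tx_n, Sx_n\rangle_A \to {\|T\|}_A{\|S\|}_A$; translating this back to $\langle S^{\sharp_A}Tx_n, x_n\rangle_A \to {\|T\|}_A{\|S\|}_A$ and using $\langle S^{\sharp_A}Tx_n, x_n\rangle_A \le \|S^{\sharp_A}T\|_A$ gives the lower bound ${\|T\|}_A{\|S\|}_A \le \|S^{\sharp_A}T\|_A$. The matching upper bound comes from submultiplicativity of the $A$-seminorm together with $\|S^{\sharp_A}\|_A = \|S\|_A$ from (\ref{I.S1.0}), namely $\|S^{\sharp_A}T\|_A \le \|S^{\sharp_A}\|_A\|T\|_A = \|S\|_A\|T\|_A$. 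The two bounds force equality.

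I do not expect any serious obstacle here: the content is entirely in recognizing the identity $\langle Tx, Sx\rangle_A = \langle S^{\sharp_A}Tx, x\rangle_A$ and that $A$-positivity turns the $A$-seminorm of $S^{\sharp_A}T$ into a supremum of these inner products, after which Theorem \ref{T.4} applies in both directions. The only point requiring a little care is verifying that the limiting quantity produced by Theorem \ref{T.4} is real and nonnegative, so that it genuinely coincides with the value of the supremum defining $\|S^{\sharp_A}T\|_A$ — but this is guaranteed precisely by the $A$-positivity hypothesis on $S^{\sharp_A}T$.
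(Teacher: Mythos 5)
Your proposal is correct and follows essentially the same route as the paper: both directions reduce to Theorem \ref{T.4} via the identity ${\langle Tx, Sx\rangle}_{A} = {\langle S^{\sharp_{A}}Tx, x\rangle}_{A}$, with $A$-positivity used exactly where the paper uses it, namely to realize ${\|S^{\sharp_{A}}T\|}_{A}$ as $\sup\{{\langle S^{\sharp_{A}}Tx, x\rangle}_{A} : {\|x\|}_{A}=1\}$ in the direction (ii)$\Rightarrow$(i). The only cosmetic difference is that for (i)$\Rightarrow$(ii) the paper invokes the limit relation (\ref{I.3.T.3.9}) from the proof of Theorem \ref{T.4}, whereas you bound ${\langle S^{\sharp_{A}}Tx_n, x_n\rangle}_{A}$ by ${\|S^{\sharp_{A}}T\|}_{A}$ directly via Cauchy--Schwarz; both are equally valid.
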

\begin{proof}
Let ${\|T + S\|}_{A} = {\|T\|}_{A} + {\|S\|}_{A}$.
There exists a sequence of $A$-unit vectors $\{x_n\}$ in $\mathcal{H}$ such that (\ref{I.3.T.3.9}) is satisfied.
Therefore,
\begin{align*}
{\|S\|}_{A}\,{\|T\|}_{A}
= \displaystyle{\lim_{n\rightarrow +\infty}}{\|S^{\sharp_{A}}Tx_n\|}_{A}
\leq {\|S^{\sharp_{A}}T\|}_{A} \leq {\|S^{\sharp_{A}}\|}_{A}\,{\|T\|}_{A} = {\|S\|}_{A}\,{\|T\|}_{A},
\end{align*}
and hence ${\|S^{\sharp_{A}}T\|}_{A} = {\|S\|}_{A}\,{\|T\|}_{A}$.

Conversely, assume that ${\|S^{\sharp_{A}}T\|}_{A} = {\|S\|}_{A}\,{\|T\|}_{A}$.
Since $S^{\sharp_{A}}T$ is $A$-positive,   there exists a sequence of $A$-unit vectors $\{x_n\}$ in $\mathcal{H}$ such that
\begin{align*}
\displaystyle{\lim_{n\rightarrow +\infty}}{\langle S^{\sharp_{A}}Tx_n, x_n\rangle}_{A}
= {\|S^{\sharp_{A}}T\|}_{A}.
\end{align*}
Thus $\displaystyle{\lim_{n\rightarrow +\infty}}{\langle Tx_n, Sx_n\rangle}_{A} = {\|T\|}_{A}\,{\|S\|}_{A}$.
Hence, by Theorem \ref{T.4}, we get ${\|T + S\|}_{A} = {\|T\|}_{A} + {\|S\|}_{A}$.
\end{proof}
Next, we present another improvement of the triangle inequality for semi-Hilbertian space operators.
\begin{proposition}\label{P.4.9}
Let $T, S\in\mathbb{B}_{A}(\mathcal{H})$. Then
\begin{align*}
{\|T + S\|}_{A} \leq \Big({\|T^{\sharp_{A}}T + S^{\sharp_{A}}S\|}_{A}
+ 2w_{A}(S^{\sharp_{A}}T)\Big)^{1/2} \leq {\|T\|}_{A} + {\|S\|}_{A}.
\end{align*}
\end{proposition}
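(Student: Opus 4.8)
The plan is to estimate ${\|(T+S)x\|}_A^2$ pointwise over all $A$-unit vectors $x$ and then take a supremum, using the identities that relate $A$-seminorms to the $A$-adjoint together with the $A$-positivity of $T^{\sharp_A}T + S^{\sharp_A}S$. First I would fix $x\in\mathcal{H}$ with ${\|x\|}_A = 1$ and expand
\begin{align*}
{\|(T+S)x\|}_A^2 = {\|Tx\|}_A^2 + 2\,\mathrm{Re}\,{\langle Tx, Sx\rangle}_A + {\|Sx\|}_A^2.
\end{align*}
Using the defining relation of the $A$-adjoint, ${\langle Ux, y\rangle}_A = {\langle x, U^{\sharp_A}y\rangle}_A$, I would rewrite ${\|Tx\|}_A^2 = {\langle T^{\sharp_A}Tx, x\rangle}_A$, ${\|Sx\|}_A^2 = {\langle S^{\sharp_A}Sx, x\rangle}_A$, and ${\langle Tx, Sx\rangle}_A = {\langle S^{\sharp_A}Tx, x\rangle}_A$, so that
\begin{align*}
{\|(T+S)x\|}_A^2 = {\langle (T^{\sharp_A}T + S^{\sharp_A}S)x, x\rangle}_A + 2\,\mathrm{Re}\,{\langle S^{\sharp_A}Tx, x\rangle}_A.
\end{align*}

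Next I would bound the two terms separately. Since $T^{\sharp_A}T$ and $S^{\sharp_A}S$ are $A$-positive, their sum is $A$-positive, and the representation ${\|R\|}_A = \sup\{{\langle Rx, x\rangle}_A : {\|x\|}_A = 1\}$ valid for $A$-positive $R$ yields ${\langle (T^{\sharp_A}T + S^{\sharp_A}S)x, x\rangle}_A \leq {\|T^{\sharp_A}T + S^{\sharp_A}S\|}_A$. For the cross term, directly from the definition of the $A$-numerical radius, $\mathrm{Re}\,{\langle S^{\sharp_A}Tx, x\rangle}_A \leq |{\langle S^{\sharp_A}Tx, x\rangle}_A| \leq w_A(S^{\sharp_A}T)$. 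Combining these, taking the supremum over all $A$-unit vectors $x$, and then a square root delivers the left inequality.

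For the right inequality I would argue purely at the level of seminorms. The triangle inequality for ${\|\cdot\|}_A$ together with (\ref{I.S1.0}) gives ${\|T^{\sharp_A}T + S^{\sharp_A}S\|}_A \leq {\|T^{\sharp_A}T\|}_A + {\|S^{\sharp_A}S\|}_A = {\|T\|}_A^2 + {\|S\|}_A^2$, while (\ref{I.S1.1}), submultiplicativity of ${\|\cdot\|}_A$, and (\ref{I.S1.0}) give $w_A(S^{\sharp_A}T) \leq {\|S^{\sharp_A}T\|}_A \leq {\|S^{\sharp_A}\|}_A{\|T\|}_A = {\|S\|}_A{\|T\|}_A$. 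Adding these and recognizing the sum as $({\|T\|}_A + {\|S\|}_A)^2$ finishes the proof after a square root. I do not expect a genuine obstacle here: the only points requiring care are the correct bookkeeping of the $A$-adjoint identities and the use of $A$-positivity (rather than mere $A$-selfadjointness) to pass from ${\langle Rx, x\rangle}_A$ to ${\|R\|}_A$.
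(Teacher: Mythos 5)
Your proposal is correct and follows essentially the same route as the paper: expand ${\|(T+S)x\|}_A^2$ for an $A$-unit vector $x$, pass the cross term to ${\langle S^{\sharp_A}Tx,x\rangle}_A$ and bound it by $w_A(S^{\sharp_A}T)$, bound the $A$-positive part by its $A$-seminorm, and then use $\|T^{\sharp_A}T\|_A=\|T\|_A^2$ together with $\|S^{\sharp_A}\|_A=\|S\|_A$ for the second inequality. No gaps.
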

\begin{proof}
Let $x \in \mathcal{H}$ with ${\|x\|}_{A} = 1$. Then
\begin{align*}
{\|Tx + Sx\|}^2_{A} &= {\langle Tx, Tx\rangle}_{A} + {\langle Sx, Sx\rangle}_{A} + 2\mbox{Re}{\langle Tx, Sx\rangle}_{A}
\\& \leq {\langle T^{\sharp_{A}}Tx, x\rangle}_{A} + {\langle S^{\sharp_{A}}Sx, x\rangle}_{A} + 2\big|{\langle Tx, Sx\rangle}_{A}\big|
\\& = \big{\langle (T^{\sharp_{A}}T + S^{\sharp_{A}}S)x, x \big\rangle}_{A} + 2\big|{\langle S^{\sharp_{A}}Tx, x\rangle}_{A}\big|
\\& \leq {\|T^{\sharp_{A}}T + S^{\sharp_{A}}S\|}_{A} + 2w_{A}(S^{\sharp_{A}}T).
\end{align*}
Thus
\begin{align*}
{\|Tx + Sx\|}^2_{A} \leq {\|T^{\sharp_{A}}T + S^{\sharp_{A}}S\|}_{A} + 2w_{A}(S^{\sharp_{A}}T).
\end{align*}
Taking the supremum over unit vectors $x \in \mathcal{H}$ in the above inequality, we arrive at
\begin{align}\label{I.1.T.1}
{\|T + S\|}^2_{A} \leq {\|T^{\sharp_{A}}T + S^{\sharp_{A}}S\|}_{A} + 2w_{A}(S^{\sharp_{A}}T).
\end{align}
Further, by (\ref{I.S1.0}) and (\ref{I.S1.1}), we have
\begin{align}\label{I.2.T.1}
w_{A}(S^{\sharp_{A}}T) \leq {\|S^{\sharp_{A}}T\|}_{A}
\leq {\|S^{\sharp_{A}}\|}_{A}\,{\|T\|}_{A} = {\|S\|}_{A}\,{\|T\|}_{A}.
\end{align}
So, by (\ref{I.S1.0}) and (\ref{I.2.T.1}), we obtain
\begin{align*}
{\|T^{\sharp_{A}}T + S^{\sharp_{A}}S\|}_{A} &+ 2w_{A}(S^{\sharp_{A}}T)
\\& \leq {\|T^{\sharp_{A}}T\|}_{A} + {\|S^{\sharp_{A}}S\|}_{A} + 2{\|S\|}_{A}\,{\|T\|}_{A}
\\& = {\|T\|}^2_{A} + {\|S\|}^2_{A} + 2{\|S\|}_{A}\,{\|T\|}_{A} = \big({\|T\|}_{A} + {\|S\|}_{A}\big)^2.
\end{align*}
Hence
\begin{align}\label{I.3.T.1}
{\|T^{\sharp_{A}}T + S^{\sharp_{A}}S\|}_{A} + 2w_{A}(S^{\sharp_{A}}T)
\leq \big({\|T\|}_{A} + {\|S\|}_{A}\big)^2.
\end{align}
Utilizing (\ref{I.1.T.1}) and (\ref{I.3.T.1}), we deduce the desired result.
\end{proof}
If $T, S\in\mathbb{B}_{A}(\mathcal{H})$, then
${\|T + S\|}_{A} \leq {\|T\|}_{A} + {\|S\|}_{A} $
and hence
\begin{align}\label{I.0.T.5}
{\|T + S\|}_{A} \leq 2\max\big\{{\|T\|}_{A}, {\|S\|}_{A}\big\}.
\end{align}
In the following theorem, we mimic \cite[Theorem 2.5]{H.K.S.2} to prove a condition for the equality in (\ref{I.0.T.5}).
\begin{theorem}\label{T.5}
Let $T, S\in\mathbb{B}_{A}(\mathcal{H})$. Then the following conditions are equivalent:
\begin{itemize}
\item[(i)] ${\|T + S\|}_{A} = 2\max\big\{{\|T\|}_{A}, {\|S\|}_{A}\big\}$.
\item[(ii)] $w_{A}(S^{\sharp_{A}}T) = \max\big\{{\|T\|}^2_{A}, {\|S\|}^2_{A}\big\}$.
\end{itemize}
\end{theorem}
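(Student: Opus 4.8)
The plan is to prove the two implications separately, after first recording that each of (i) and (ii) already forces the normalization $\|T\|_A=\|S\|_A$. Writing $M:=\max\{\|T\|_A,\|S\|_A\}$, the triangle inequality gives $\|T+S\|_A\le\|T\|_A+\|S\|_A\le 2M$, so (i) can hold only when $\|T\|_A+\|S\|_A=2M$, i.e.\ $\|T\|_A=\|S\|_A=M$. Likewise, combining $w_A(S^{\sharp_A}T)\le\|S^{\sharp_A}T\|_A\le\|S^{\sharp_A}\|_A\|T\|_A=\|S\|_A\|T\|_A$ (using (\ref{I.S1.1}) and (\ref{I.S1.0})) with $\|S\|_A\|T\|_A\le M^2$ shows that (ii) forces $\|S\|_A\|T\|_A=M^2$, hence again $\|T\|_A=\|S\|_A=M$. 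Thus I may assume throughout that $\|T\|_A=\|S\|_A=M$, under which (i) becomes the triangle equality $\|T+S\|_A=\|T\|_A+\|S\|_A$ and (ii) becomes $w_A(S^{\sharp_A}T)=M^2=\|T\|_A\|S\|_A$; this is precisely the situation governed by Theorem~\ref{T.4}.

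For (i)$\Rightarrow$(ii) I would feed the triangle equality into Theorem~\ref{T.4}, obtaining $A$-unit vectors $\{x_n\}$ with $\langle Tx_n,Sx_n\rangle_A\to\|T\|_A\|S\|_A=M^2$. Since $\langle Tx_n,Sx_n\rangle_A=\langle S^{\sharp_A}Tx_n,x_n\rangle_A$, this yields $w_A(S^{\sharp_A}T)\ge\lim_n|\langle S^{\sharp_A}Tx_n,x_n\rangle_A|=M^2$. The reverse bound $w_A(S^{\sharp_A}T)\le\|S^{\sharp_A}T\|_A\le\|S\|_A\|T\|_A=M^2$ is exactly the chain used above, so $w_A(S^{\sharp_A}T)=M^2=\max\{\|T\|_A^2,\|S\|_A^2\}$, which is (ii). This direction is clean and uses only Theorem~\ref{T.4} together with (\ref{I.S1.0})--(\ref{I.S1.1}).

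For the converse (ii)$\Rightarrow$(i) I would start from $w_A(S^{\sharp_A}T)=M^2$ and select $A$-unit vectors $\{x_n\}$ with $|\langle Tx_n,Sx_n\rangle_A|=|\langle S^{\sharp_A}Tx_n,x_n\rangle_A|\to M^2$. Because $|\langle Tx_n,Sx_n\rangle_A|\le\|Tx_n\|_A\|Sx_n\|_A\le M^2$, the squeeze forces $\|Tx_n\|_A\to M$ and $\|Sx_n\|_A\to M$, and the Cauchy--Schwarz near-equality makes $Tx_n$ and $Sx_n$ asymptotically $A$-parallel. The goal is then to invoke the reverse direction of Theorem~\ref{T.4}, for which I must produce a sequence along which $\langle Tx_n,Sx_n\rangle_A$ converges to the \emph{positive real} value $M^2$, not merely to some scalar of modulus $M^2$.

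This last point is where I expect the main obstacle to lie. The $A$-numerical radius controls only $|\langle Tx_n,Sx_n\rangle_A|$, whereas the identity $\|(T+S)x_n\|_A^2=\|Tx_n\|_A^2+\|Sx_n\|_A^2+2\,\mathrm{Re}\,\langle Tx_n,Sx_n\rangle_A$ (equivalently, Theorem~\ref{T.4}) needs control of the real part; a limiting phase $\arg\langle Tx_n,Sx_n\rangle_A\neq 0$ would lower the sum below $4M^2$. My attempt would be to pass to a subsequence along which this phase converges and to work through the phase-rotated numerical-range description of $w_A$ afforded by (\ref{I.S1.2}), rather than with $w_A$ as a bare supremum of moduli. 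I anticipate that the delicate issue is that $x_n$ occupies \emph{both} slots of $\langle T\,\cdot\,,S\,\cdot\,\rangle_A$, so a unimodular rescaling of $x_n$ leaves $\langle Tx_n,Sx_n\rangle_A$ unchanged and cannot by itself correct the phase; resolving this may require that the hypothesis be read as membership of $M^2$ in the closure of the $A$-numerical \emph{range} of $S^{\sharp_A}T$ rather than equality of the $A$-numerical radius. Once the real part of $\langle Tx_n,Sx_n\rangle_A$ is shown to tend to $M^2$, Theorem~\ref{T.4} immediately gives $\|T+S\|_A=\|T\|_A+\|S\|_A=2M=2\max\{\|T\|_A,\|S\|_A\}$, which is (i).
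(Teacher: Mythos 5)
Your first implication is sound and is exactly the paper's argument: (i) forces $\|T\|_{A}=\|S\|_{A}$ and the triangle equality, Theorem~\ref{T.4} produces $A$-unit vectors with ${\langle Tx_n,Sx_n\rangle}_{A}={\langle S^{\sharp_{A}}Tx_n,x_n\rangle}_{A}\to\|T\|_{A}\|S\|_{A}$, and the chain $w_{A}(S^{\sharp_{A}}T)\le\|S^{\sharp_{A}}\|_{A}\|T\|_{A}=\|S\|_{A}\|T\|_{A}$ closes the estimate. The obstacle you flag in the converse, however, is not a technicality you failed to overcome: it is a genuine gap, and it is present in the paper's own proof as well. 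From $w_{A}(S^{\sharp_{A}}T)=\|T\|_{A}\|S\|_{A}$ the paper simply asserts the existence of $A$-unit vectors with ${\langle S^{\sharp_{A}}Tx_n,x_n\rangle}_{A}\to\|T\|_{A}\|S\|_{A}$, i.e.\ convergence to the \emph{positive real} value; but the definition of $w_{A}$ only yields $|{\langle S^{\sharp_{A}}Tx_n,x_n\rangle}_{A}|\to\|T\|_{A}\|S\|_{A}$, and, as you observe, no unimodular rescaling of $x_n$ can rotate the phase away since $x_n$ occupies both slots. Theorem~\ref{T.4} genuinely needs the real part, so the argument does not go through.

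In fact the implication (ii)$\Rightarrow$(i) is false as stated. Take $A=I$, $S=I$, $T=-I$ (or $T=iI$). Then $S^{\sharp_{A}}T=-I$, so $w_{A}(S^{\sharp_{A}}T)=1=\max\{\|T\|^{2}_{A},\|S\|^{2}_{A}\}$ and (ii) holds, yet $\|T+S\|_{A}=0\neq 2$. Your proposed repair is the correct one: the right-hand condition must be that $\max\{\|T\|^{2}_{A},\|S\|^{2}_{A}\}$ lies in the closure of the $A$-numerical range $\{{\langle S^{\sharp_{A}}Tx,x\rangle}_{A}:\|x\|_{A}=1\}$, not merely that it equals the $A$-numerical radius. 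With that reading, the phase problem disappears, ${\langle Tx_n,Sx_n\rangle}_{A}\to\|T\|_{A}\|S\|_{A}$ holds by hypothesis, and both directions follow from Theorem~\ref{T.4} exactly as you outline. So your write-up correctly diagnoses a flaw in the theorem rather than a defect in your own approach; the honest conclusion is that only (i)$\Rightarrow$(ii) can be proved as stated.
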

\begin{proof}
(i)$\Rightarrow$(ii) Let ${\|T + S\|}_{A} = 2\max\big\{{\|T\|}_{A}, {\|S\|}_{A}\big\}$. Since
\begin{align*}
{\|T + S\|}_{A} \leq {\|T\|}_{A} + {\|S\|}_{A} \leq 2\max\big\{{\|T\|}_{A}, {\|S\|}_{A}\big\},
\end{align*}
we get
\begin{align*}
{\|T\|}_{A} + {\|S\|}_{A} = 2\max\big\{{\|T\|}_{A}, {\|S\|}_{A}\big\}.
\end{align*}
Hence ${\|T\|}_{A} = {\|S\|}_{A} = \max\big\{{\|T\|}_{A}, {\|S\|}_{A}\big\}$.
Thus ${\|T + S\|}_{A} = {\|T\|}_{A} + {\|S\|}_{A}$.
By Theorem \ref{T.4}, there exists a sequence of $A$-unit vectors $\{x_n\}$ in $\mathcal{H}$ such that
\begin{align*}
\displaystyle{\lim_{n\rightarrow +\infty}}{\langle Tx_n, Sx_n\rangle}_{A} = {\|T\|}_{A}\,{\|S\|}_{A}.
\end{align*}
This implies
\begin{align}\label{I.1.T.5}
\max\big\{{\|T\|}^2_{A}, {\|S\|}^2_{A}\big\} = {\|T\|}_{A}\,{\|S\|}_{A} \leq w_{A}(S^{\sharp_{A}}T).
\end{align}
Further, by the second inequality in (\ref{I.S1.1}) and the arithmetic-geometric mean inequality, we have
\begingroup\makeatletter\def\f@size{10}\check@mathfonts
\begin{align}\label{I.2.T.5}
w_{A}(S^{\sharp_{A}}T) \leq {\|S^{\sharp_{A}}T\|}_{A} \leq {\|S^{\sharp_{A}}\|}_{A}\,{\|T\|}_{A}
= {\|T\|}_{A}\,{\|S\|}_{A} \leq \frac{{\|T\|}^2_{A} + {\|S\|}^2_{A}}{2}
\leq \max\big\{{\|T\|}^2_{A}, {\|S\|}^2_{A}\big\}
\end{align}
\endgroup
and hence
\begin{align}\label{I.3.T.5}
w_{A}(S^{\sharp_{A}}T) \leq \max\big\{{\|T\|}^2_{A}, {\|S\|}^2_{A}\big\}.
\end{align}
By (\ref{I.1.T.5}) and (\ref{I.3.T.5}), we conclude that $w_{A}(S^{\sharp_{A}}T) = \max\big\{{\|T\|}^2_{A}, {\|S\|}^2_{A}\big\}$.

(ii)$\Rightarrow$(i) Let $w_{A}(S^{\sharp_{A}}T) = \max\big\{{\|T\|}^2_{A}, {\|S\|}^2_{A}\big\}$.
From (\ref{I.2.T.5}) it follows that
\begin{align*}
\max\big\{{\|T\|}^2_{A}, {\|S\|}^2_{A}\big\} = w_{A}(S^{\sharp_{A}}T)
\leq {\|T\|}_{A}\,{\|S\|}_{A} \leq \max\big\{{\|T\|}^2_{A}, {\|S\|}^2_{A}\big\},
\end{align*}
which yields ${\|T\|}_{A} = {\|S\|}_{A}$ and $w_{A}(S^{\sharp_{A}}T) = {\|T\|}_{A}\,{\|S\|}_{A}$.
So there exists a sequence of $A$-unit vectors $\{x_n\}$ in $\mathcal{H}$ such that
$\displaystyle{\lim_{n\rightarrow +\infty}}{\langle S^{\sharp_{A}}Tx_n, x_n\rangle}_{A} = {\|T\|}_{A}\,{\|S\|}_{A}$,
or equivalently,
$\displaystyle{\lim_{n\rightarrow +\infty}}{\langle Tx_n, Sx_n\rangle}_{A} = {\|T\|}_{A}\,{\|S\|}_{A}$.
Now, by Theorem \ref{T.4}, we obtain
${\|T + S\|}_{A} = {\|T\|}_{A} + {\|S\|}_{A}$ and so ${\|T + S\|}_{A} = 2\max\big\{{\|T\|}_{A}, {\|S\|}_{A}\big\}$.
\end{proof}
It is easy to see that Pythagoras' equality does not hold for semi-Hilbertian space operators.
The following theorem characterizes when Pythagoras' equality holds for semi-Hilbertian space operators.
\begin{theorem}\label{T.6}
Let $T, S\in\mathbb{B}_{A}(\mathcal{H})$   such that $S^{\sharp_{A}}T = 0$.
Then the following conditions are equivalent:
\begin{itemize}
\item[(i)] ${\|T + S\|}^2_{A} = {\|T\|}^2_{A} + {\|S\|}^2_{A}$.
\item[(ii)] There exists a sequence of $A$-unit vectors $\{x_n\}$ in $\mathcal{H}$ such that
\begin{align*}
\displaystyle{\lim_{n\rightarrow +\infty}}{\langle T^{\sharp_{A}}Tx_n, S^{\sharp_{A}}Sx_n\rangle}_{A}
= {\|T\|}^2_{A}\,{\|S\|}^2_{A}.
\end{align*}
\end{itemize}
\end{theorem}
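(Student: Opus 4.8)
The plan is to first convert the Pythagorean equality (i) into a statement about simultaneous norm attainment, and then to tie that statement to (ii) through an approximate-eigenvector argument. The orthogonality hypothesis $S^{\sharp_A}T=0$ is what makes the first step work: for every $x\in\mathcal{H}$ we have ${\langle Tx, Sx\rangle}_A = {\langle S^{\sharp_A}Tx, x\rangle}_A = 0$, so the cross term in ${\|(T+S)x\|}_A^2$ disappears and ${\|(T+S)x\|}_A^2 = {\|Tx\|}_A^2 + {\|Sx\|}_A^2$ for all $x$. Taking the supremum over $A$-unit vectors and using $0\le{\|Tx\|}_A\le{\|T\|}_A$ and $0\le{\|Sx\|}_A\le{\|S\|}_A$, one gets ${\|T+S\|}_A^2\le{\|T\|}_A^2+{\|S\|}_A^2$ always, with equality (that is, (i)) precisely when there is a sequence of $A$-unit vectors $\{x_n\}$ satisfying ${\|Tx_n\|}_A\to{\|T\|}_A$ \emph{and} ${\|Sx_n\|}_A\to{\|S\|}_A$ at once. (If ${\|T\|}_A=0$ or ${\|S\|}_A=0$ both (i) and (ii) hold trivially, so I may assume both are positive.)

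For (i)$\Rightarrow$(ii) I take such a sequence and show that $P:=T^{\sharp_A}T$ acts on it asymptotically as the scalar ${\|T\|}_A^2$. From ${\langle Px_n, x_n\rangle}_A = {\|Tx_n\|}_A^2\to{\|T\|}_A^2 = {\|P\|}_A$ (the last equality by (\ref{I.S1.0})), together with ${\langle Px_n,x_n\rangle}_A\le{\|Px_n\|}_A\le{\|P\|}_A$, a squeeze gives ${\|Px_n\|}_A\to{\|T\|}_A^2$. Because ${\langle Px_n,x_n\rangle}_A$ is real and nonnegative, expanding
\[
{\|Px_n - {\|T\|}_A^2 x_n\|}_A^2 = {\|Px_n\|}_A^2 - 2{\|T\|}_A^2{\langle Px_n, x_n\rangle}_A + {\|T\|}_A^4 \longrightarrow 0 ,
\]
so $Px_n - {\|T\|}_A^2 x_n\to 0$ in ${\|\cdot\|}_A$, and identically $Q x_n - {\|S\|}_A^2 x_n\to 0$ with $Q:=S^{\sharp_A}S$. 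Substituting these into ${\langle Px_n, Qx_n\rangle}_A$ and bounding the resulting error terms by vanishing seminorms via the Cauchy--Schwarz inequality for ${\langle\cdot,\cdot\rangle}_A$ yields ${\langle T^{\sharp_A}Tx_n, S^{\sharp_A}Sx_n\rangle}_A\to{\|T\|}_A^2{\|S\|}_A^2$, which is (ii).

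For (ii)$\Rightarrow$(i) I reverse the chain. Along a sequence realizing (ii), Cauchy--Schwarz gives ${\|T^{\sharp_A}Tx_n\|}_A\,{\|S^{\sharp_A}Sx_n\|}_A\ge|{\langle T^{\sharp_A}Tx_n, S^{\sharp_A}Sx_n\rangle}_A|$; since the two factors are at most ${\|T\|}_A^2$ and ${\|S\|}_A^2$, their product can approach ${\|T\|}_A^2{\|S\|}_A^2$ only if ${\|T^{\sharp_A}Tx_n\|}_A\to{\|T\|}_A^2$ and ${\|S^{\sharp_A}Sx_n\|}_A\to{\|S\|}_A^2$. Now the submultiplicativity estimate ${\|T^{\sharp_A}Tx_n\|}_A\le{\|T^{\sharp_A}\|}_A{\|Tx_n\|}_A = {\|T\|}_A{\|Tx_n\|}_A$ (using (\ref{I.S1.0})) forces ${\|Tx_n\|}_A\to{\|T\|}_A$, and likewise ${\|Sx_n\|}_A\to{\|S\|}_A$. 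Feeding this into the Pythagorean splitting gives ${\|(T+S)x_n\|}_A^2 = {\|Tx_n\|}_A^2+{\|Sx_n\|}_A^2\to{\|T\|}_A^2+{\|S\|}_A^2$; since ${\|(T+S)x_n\|}_A\le{\|T+S\|}_A$, this yields ${\|T+S\|}_A^2\ge{\|T\|}_A^2+{\|S\|}_A^2$, and with the reverse inequality from the splitting we conclude (i).

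I expect the crux to be the approximate-eigenvector step in (i)$\Rightarrow$(ii): the observation that attaining the operator seminorm forces $T^{\sharp_A}Tx_n$ to become asymptotically the scalar multiple ${\|T\|}_A^2 x_n$. This alignment is exactly what promotes the a priori Cauchy--Schwarz bound $|{\langle T^{\sharp_A}Tx_n, S^{\sharp_A}Sx_n\rangle}_A|\le{\|T\|}_A^2{\|S\|}_A^2$ into an equality in the limit; merely knowing that the two seminorms converge to their maxima would not, by itself, pin down the limit of the inner product.
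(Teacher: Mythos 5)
Your proof is correct, but it follows a genuinely different route from the paper's. The paper works entirely at the operator level: from $S^{\sharp_A}T=0$ it deduces $T^{\sharp_{A}}(S^{\sharp_{A}})^{\sharp_{A}} = S^{\sharp_{A}}(T^{\sharp_{A}})^{\sharp_{A}} = 0$, uses the identity ${\|T+S\|}_A^2 = \big\|(T+S)^{\sharp_A}\big((T+S)^{\sharp_A}\big)^{\sharp_A}\big\|_A$ to collapse the cross terms and obtain ${\|T+S\|}_A^2 = {\|T^{\sharp_A}T + S^{\sharp_A}S\|}_A$, and then simply invokes Theorem \ref{T.4} for the pair $\big(T^{\sharp_A}T, S^{\sharp_A}S\big)$, whose $A$-seminorms are ${\|T\|}_A^2$ and ${\|S\|}_A^2$ by (\ref{I.S1.0}); the whole argument is a two-step reduction. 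You instead work at the vector level: the pointwise Pythagorean identity ${\|(T+S)x\|}_A^2 = {\|Tx\|}_A^2 + {\|Sx\|}_A^2$ converts (i) into simultaneous asymptotic attainment of both seminorms, and your approximate-eigenvector step --- that an $A$-positive operator $P$ whose quadratic form is asymptotically maximized along $\{x_n\}$ satisfies $Px_n - {\|P\|}_A x_n \to 0$ in ${\|\cdot\|}_A$ --- is exactly the mechanism that the paper outsources to Theorem \ref{T.4} (and, in spirit, reproves part of it). What the paper's route buys is brevity and reuse of an already-established equality criterion; what yours buys is a self-contained argument that never needs the operator identity ${\|T+S\|}_A^2={\|T^{\sharp_A}T+S^{\sharp_A}S\|}_A$ and isolates a reusable approximate-eigenvector lemma for $A$-positive operators. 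All the individual steps you use (the vanishing of the cross term via ${\langle Tx,Sx\rangle}_A = {\langle S^{\sharp_A}Tx,x\rangle}_A$, the squeeze giving ${\|Px_n\|}_A\to{\|P\|}_A$, the expansion of ${\|Px_n-{\|T\|}_A^2x_n\|}_A^2$ using the realness of ${\langle Px_n,x_n\rangle}_A$, and the product-of-bounded-factors argument in the converse) are sound, and your separate treatment of the degenerate case ${\|T\|}_A=0$ or ${\|S\|}_A=0$ is appropriate.
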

\begin{proof}
Due to $S^{\sharp_{A}}T = 0$, we have $T^{\sharp_{A}}(S^{\sharp_{A}})^{\sharp_{A}} = 0$.
Hence $S^{\sharp_{A}} (T^{\sharp_{A}})^{\sharp_{A}}= 0$.
Thus
\begin{align*}
{\|T + S\|}^2_{A}& = {\Big\|(T + S)^{\sharp_{A}}\big((T + S)^{\sharp_{A}}\big)^{\sharp_{A}}\Big\|}_{A}
\\& = {\Big\|T^{\sharp_{A}}(T^{\sharp_{A}})^{\sharp_{A}} + T^{\sharp_{A}}(S^{\sharp_{A}})^{\sharp_{A}}
+ S^{\sharp_{A}}(T^{\sharp_{A}})^{\sharp_{A}} + S^{\sharp_{A}}(S^{\sharp_{A}})^{\sharp_{A}}\Big\|}_{A}
\\& = {\|T^{\sharp_{A}}(T^{\sharp_{A}})^{\sharp_{A}} + S^{\sharp_{A}}(S^{\sharp_{A}})^{\sharp_{A}}\|}_{A}
\\& = {\|\big(T^{\sharp_{A}}T + S^{\sharp_{A}}S\big)^{\sharp_{A}}\|}_{A}
= {\|T^{\sharp_{A}}T + S^{\sharp_{A}}S\|}_{A}.
\end{align*}
Hence
\begin{align}\label{I.1.T.6}
{\|T + S\|}^2_{A} = {\|T^{\sharp_{A}}T + S^{\sharp_{A}}S\|}_{A}.
\end{align}

Now, let ${\|T + S\|}^2_{A} = {\|T\|}^2_{A} + {\|S\|}^2_{A}$.
Then, by (\ref{I.S1.0}) and (\ref{I.1.T.6}), it follows that
\begin{align*}
{\|T^{\sharp_{A}}T\|}_{A} + {\|S^{\sharp_{A}}S\|}_{A} = {\|T + S\|}^2_{A}
= {\|T^{\sharp_{A}}T + S^{\sharp_{A}}S\|}_{A}.
\end{align*}
Hence
\begin{align*}
{\|T^{\sharp_{A}}T\|}_{A} + {\|S^{\sharp_{A}}S\|}_{A} = {\|T^{\sharp_{A}}T + S^{\sharp_{A}}S\|}_{A}.
\end{align*}
In view of Theorem \ref{T.4}, there exists a sequence of $A$-unit vectors $\{x_n\}$ in $\mathcal{H}$ such that
\begin{align*}
\displaystyle{\lim_{n\rightarrow +\infty}}{\langle T^{\sharp_{A}}Tx_n, S^{\sharp_{A}}Sx_n\rangle}_{A}
= {\|T^{\sharp_{A}}T\|}_{A}\,{\|S^{\sharp_{A}}S\|}_{A},
\end{align*}
and so
\begin{align*}
\displaystyle{\lim_{n\rightarrow +\infty}}{\langle T^{\sharp_{A}}Tx_n, S^{\sharp_{A}}Sx_n\rangle}_{A}
= {\|T\|}^2_{A}\,{\|S\|}^2_{A}.
\end{align*}
To prove the converse, suppose that there exists a sequence of $A$-unit vectors $\{x_n\}$ in $\mathcal{H}$ such that
$\displaystyle{\lim_{n\rightarrow +\infty}}{\langle T^{\sharp_{A}}Tx_n, S^{\sharp_{A}}Sx_n\rangle}_{A}
= {\|T\|}^2_{A}\,{\|S\|}^2_{A}.$
Therefore,
\begin{align*}
\displaystyle{\lim_{n\rightarrow +\infty}}{\langle T^{\sharp_{A}}Tx_n, S^{\sharp_{A}}Sx_n\rangle}_{A}
= {\|T^{\sharp_{A}}T\|}_{A}\,{\|S^{\sharp_{A}}S\|}_{A}.
\end{align*}
It follows from Theorem \ref{T.4} that
${\|T^{\sharp_{A}}T + S^{\sharp_{A}}S\|}_{A} = {\|T^{\sharp_{A}}T\|}_{A} + {\|S^{\sharp_{A}}S\|}_{A}$.
Now, (\ref{I.S1.0}) and (\ref{I.1.T.6}) yield that ${\|T + S\|}^2_{A} = {\|T\|}^2_{A} + {\|S\|}^2_{A}$.
\end{proof}
\section{Further refinements of $A$-numerical radius inequalities for semi-Hilbertian space operators}
In this section, inspired by the numerical radius inequalities of bounded linear operators
in \cite{A.K.1, D.1, H.K.S.2}, we derive several $A$-numerical radius inequalities for semi-Hilbertian space operators.
Our first result reads as follows.
\begin{proposition}\label{P.7}
Let $T\in\mathbb{B}_{A}(\mathcal{H})$. Then
\begingroup\makeatletter\def\f@size{10}\check@mathfonts
\begin{align*}
\frac{1}{2} \max\Big\{{\|T - T^{\sharp_{A}}\|}_{A}, {\|T + T^{\sharp_{A}}\|}_{A}\Big\}
\leq w_{A}(T) \leq
\frac{1}{2} \Big({\|T - T^{\sharp_{A}}\|}^2_{A} + {\|T + T^{\sharp_{A}}\|}^2_{A}\Big)^{1/2}.
\end{align*}
\endgroup
\end{proposition}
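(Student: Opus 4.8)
The plan is to exploit the key identity (\ref{I.S1.2}), which says that $w_A(T)=\sup_{\theta\in\mathbb{R}}\big\|\tfrac{e^{i\theta}T+(e^{i\theta}T)^{\sharp_A}}{2}\big\|_A$. The most convenient way to access both the real and imaginary ``Cartesian parts'' of $T$ is to evaluate this at the two natural angles $\theta=0$ and $\theta=\pi/2$. At $\theta=0$ the supremand equals $\tfrac12\|T+T^{\sharp_A}\|_A$, while at $\theta=\pi/2$, using $(iT)^{\sharp_A}=-iT^{\sharp_A}$, it equals $\tfrac12\|iT-iT^{\sharp_A}\|_A=\tfrac12\|T-T^{\sharp_A}\|_A$. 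Since $w_A(T)$ is the supremum over all $\theta$, it dominates each of these two values, which immediately yields the lower bound
\[
\tfrac12\max\big\{\|T-T^{\sharp_A}\|_A,\ \|T+T^{\sharp_A}\|_A\big\}\leq w_A(T).
\]

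For the upper bound I would return to the defining quantity $\langle Tx,x\rangle_A$ for an $A$-unit vector $x$ and decompose it into Cartesian form. Write $\mathrm{Re}\,\langle Tx,x\rangle_A=\tfrac12\langle(T+T^{\sharp_A})x,x\rangle_A$ and $\mathrm{Im}\,\langle Tx,x\rangle_A=\tfrac{1}{2i}\langle(T-T^{\sharp_A})x,x\rangle_A$; the point is that $T+T^{\sharp_A}$ and $\tfrac1i(T-T^{\sharp_A})$ are $A$-selfadjoint, so each of these real/imaginary parts is a real number bounded in absolute value by $w_A$ of the respective $A$-selfadjoint operator, and for $A$-selfadjoint operators the numerical radius coincides with the seminorm (as recorded after (\ref{I.S1.1})). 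Concretely, $|\mathrm{Re}\,\langle Tx,x\rangle_A|\leq\tfrac12\|T+T^{\sharp_A}\|_A$ and $|\mathrm{Im}\,\langle Tx,x\rangle_A|\leq\tfrac12\|T-T^{\sharp_A}\|_A$. Then
\[
|\langle Tx,x\rangle_A|^2=\mathrm{Re}^2\langle Tx,x\rangle_A+\mathrm{Im}^2\langle Tx,x\rangle_A
\leq\tfrac14\big(\|T+T^{\sharp_A}\|_A^2+\|T-T^{\sharp_A}\|_A^2\big).
\]
Taking the supremum over all $A$-unit vectors $x$ and then the square root gives exactly the claimed upper bound.

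The step that requires the most care is the justification that for an $A$-selfadjoint operator $B$ one has $|\langle Bx,x\rangle_A|\leq w_A(B)=\|B\|_A$ applied to $B=T+T^{\sharp_A}$ and to $\tfrac1i(T-T^{\sharp_A})$; I must check these operators genuinely are $A$-selfadjoint. Using $(T^{\sharp_A})^{\sharp_A}=P_{\overline{\mathcal{R}(A)}}TP_{\overline{\mathcal{R}(A)}}$ one verifies $A(T+T^{\sharp_A})=(T+T^{\sharp_A})^*A$ and similarly for the skew part, so both quantities $\langle(T\pm T^{\sharp_A})x,x\rangle_A$ are appropriately real or purely imaginary, which is what legitimizes splitting $|\langle Tx,x\rangle_A|^2$ into the sum of squares of its real and imaginary parts above. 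Once this $A$-selfadjointness is in hand, both inequalities are routine, and I anticipate no further obstacle; indeed the lower bound could alternatively be obtained directly from this same Cartesian decomposition without invoking (\ref{I.S1.2}), which provides a useful consistency check.
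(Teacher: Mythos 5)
Your argument is correct. For the upper bound you are doing exactly what the paper does: the parallelogram/Cartesian decomposition $|{\langle Tx,x\rangle}_A|^2=\mathrm{Re}^2{\langle Tx,x\rangle}_A+\mathrm{Im}^2{\langle Tx,x\rangle}_A$ together with the identities ${\langle (T\pm T^{\sharp_A})x,x\rangle}_A=2\,\mathrm{Re}\,{\langle Tx,x\rangle}_A$ resp.\ $2i\,\mathrm{Im}\,{\langle Tx,x\rangle}_A$ (valid because ${\langle T^{\sharp_A}x,x\rangle}_A=\overline{{\langle Tx,x\rangle}_A}$), followed by the bound $|{\langle Bx,x\rangle}_A|\leq{\|B\|}_A$ for $A$-unit $x$; note you do not even need the $A$-selfadjointness of $T\pm T^{\sharp_A}$ for this last estimate, since Cauchy--Schwarz for ${\langle\cdot,\cdot\rangle}_A$ already gives it, so the only place the adjoint relation enters is the identification of the real and imaginary parts. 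For the lower bound your route is genuinely different from the paper's: you specialize the supremum formula (\ref{I.S1.2}) at $\theta=0$ and $\theta=\pi/2$, using $(iT)^{\sharp_A}=-iT^{\sharp_A}$, which gives $w_A(T)\geq\frac12{\|T\pm T^{\sharp_A}\|}_A$ in two lines. The paper instead proves the pointwise inequality $|{\langle Tx,x\rangle}_A|^2\geq\frac14\big|{\langle (T^{\sharp_A}\pm(T^{\sharp_A})^{\sharp_A})x,x\rangle}_A\big|^2$, takes suprema, and then invokes the $A$-normality of $T^{\sharp_A}\pm(T^{\sharp_A})^{\sharp_A}$ to convert $w_A$ into ${\|\cdot\|}_A$, arriving at the same intermediate inequality (\ref{I.2.9.T.7}). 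Your version is shorter and avoids the $A$-normality discussion entirely, at the cost of relying on the nontrivial external result (\ref{I.S1.2}); both yield the estimate $w^2_A(T)\geq\frac14{\|T\pm T^{\sharp_A}\|}^2_A$ that the paper reuses later in Theorem \ref{T.8}, so nothing downstream is lost.
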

\begin{proof}
Employing $\big((T^{\sharp_{A}})^{\sharp_{A}}\big)^{\sharp_{A}} = T^{\sharp_{A}}$,
one can easily observe that the operators
$T^{\sharp_{A}} \pm (T^{\sharp_{A}})^{\sharp_{A}}$ are $A$-normal. Thus
\begin{align}\label{I.1.T.7}
w_{A}\big(T^{\sharp_{A}} \pm (T^{\sharp_{A}})^{\sharp_{A}}\big)
= {\big\|T^{\sharp_{A}} \pm (T^{\sharp_{A}})^{\sharp_{A}}\big\|}_{A}.
\end{align}
Let $x \in \mathcal{H}$ with ${\|x\|}_{A} = 1$.
By the triangle inequality, we have
\begin{align*}
|{\langle Tx, x\rangle}_{A}|^2
&= \frac{1}{2}\Big(|{\langle T^{\sharp_{A}}x, x\rangle}_{A}|^2
+ |{\langle (T^{\sharp_{A}})^{\sharp_{A}}x, x\rangle}_{A}|^2\Big)
\\& \geq \frac{1}{4}\Big(|{\langle T^{\sharp_{A}}x, x\rangle}_{A}|
+ |{\langle (T^{\sharp_{A}})^{\sharp_{A}}x, x\rangle}_{A}|\Big)^2
\\& \geq \frac{1}{4}\Big|{\big\langle (T^{\sharp_{A}} \pm (T^{\sharp_{A}})^{\sharp_{A}})x, x\big\rangle}_{A}\Big|^2.
\end{align*}
Taking the supremum over unit vectors $x \in \mathcal{H}$, we obtain
\begin{align*}
w^2_{A}(T) \geq \frac{1}{4}w_{A}\big(T^{\sharp_{A}} \pm (T^{\sharp_{A}})^{\sharp_{A}}\big).
\end{align*}
This together with (\ref{I.1.T.7}) and (\ref{I.S1.0}) gives
\begin{align}\label{I.2.9.T.7}
w^2_{A}(T) \geq \frac{1}{4}{\big\|T^{\sharp_{A}} \pm (T^{\sharp_{A}})^{\sharp_{A}}\big\|}^2_{A}
= \frac{1}{4}{\|T \pm T^{\sharp_{A}}\|}^2_{A},
\end{align}
which yields
\begin{align}\label{I.2.T.7}
\frac{1}{2} \max\Big\{{\|T - T^{\sharp_{A}}\|}_{A}, {\|T + T^{\sharp_{A}}\|}_{A}\Big\} \leq w_{A}(T).
\end{align}
Again, let $x \in \mathcal{H}$ with ${\|x\|}_{A} = 1$. By the parallelogram identity, we have
\begin{align*}
|{\langle Tx, x\rangle}_{A}|^2
&= \frac{1}{2}\Big(|{\langle Tx, x\rangle}_{A}|^2 + |{\langle T^{\sharp_{A}}x, x\rangle}_{A}|^2\Big)
\\& = \frac{1}{4}\Big(\big|{\big\langle (T + T^{\sharp_{A}})x, x\big\rangle}_{A}\big|^2
+ \big|{\big\langle (T - T^{\sharp_{A}})x, x\big\rangle}_{A}\big|^2\Big)
\\& \leq \frac{1}{4}\Big({\|T - T^{\sharp_{A}}\|}^2_{A} + {\|T + T^{\sharp_{A}}\|}^2_{A}\Big).
\end{align*}
Taking the supremum over $x \in \mathcal{H}$ with ${\|x\|}_{A} = 1$ in the above inequality, we get
\begin{align*}
w^2_{A}(T) \leq
\frac{1}{4} \Big({\|T - T^{\sharp_{A}}\|}^2_{A} + {\|T + T^{\sharp_{A}}\|}^2_{A}\Big),
\end{align*}
and hence
\begin{align}\label{I.3.T.7}
w_{A}(T) \leq
\frac{1}{2} \Big({\|T - T^{\sharp_{A}}\|}^2_{A} + {\|T + T^{\sharp_{A}}\|}^2_{A}\Big)^{1/2}.
\end{align}
From (\ref{I.2.T.7}) and (\ref{I.3.T.7}), we deduce the desired result.
\end{proof}
\begin{remark}\label{R.7.1}
The first inequality in Proposition \ref{P.7} has recently been proved by a different way in \cite[Corollary 2.7]{Z.3}.
\end{remark}
\begin{theorem}\label{T.8}
Let $T\in\mathbb{B}_{A}(\mathcal{H})$. Then
\begingroup\makeatletter\def\f@size{10}\check@mathfonts
\begin{align*}
\frac{1}{2} \max\Big\{{\|T^2 - (T^{\sharp_{A}})^2\|}^{1/2}_{A}, {\|T^2 + (T^{\sharp_{A}})^2\|}^{1/2}_{A}\Big\}
\leq w_{A}(T) \leq
\frac{\sqrt{2}}{2} \Big({\|T\|}^2_{A} + w_{A}(T^2)\Big)^{1/2}.
\end{align*}
\endgroup
\end{theorem}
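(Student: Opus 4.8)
The plan is to prove the two estimates separately. The lower bound I would obtain from Proposition \ref{P.7} combined with two elementary factorization identities, while the upper bound I would get from a Buzano-type refinement of the Cauchy--Schwarz inequality in the semi-inner product space $(\mathcal{H}, \langle\cdot,\cdot\rangle_A)$.

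For the lower bound, the first step is to record the consequence $\|T \pm T^{\sharp_A}\|_A \le 2w_A(T)$ of the left inequality in Proposition \ref{P.7}. The key observation is the pair of identities $T^2 + (T^{\sharp_A})^2 = \tfrac12\big[(T+T^{\sharp_A})^2 + (T - T^{\sharp_A})^2\big]$ and $T^2 - (T^{\sharp_A})^2 = \tfrac12\big[(T+T^{\sharp_A})(T-T^{\sharp_A}) + (T-T^{\sharp_A})(T+T^{\sharp_A})\big]$, both verified by expanding the right-hand sides. Applying the triangle inequality together with the submultiplicativity $\|XY\|_A \le \|X\|_A\|Y\|_A$ to each identity, and then inserting $\|T\pm T^{\sharp_A}\|_A \le 2w_A(T)$, I would conclude $\|T^2 \pm (T^{\sharp_A})^2\|_A \le 4 w_A^2(T)$ in both cases. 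Taking square roots and then the maximum yields the left-hand inequality of the theorem with the stated constant $\tfrac12$.

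For the upper bound it suffices to establish $2 w_A^2(T) \le \|T\|_A^2 + w_A(T^2)$. I would fix an $A$-unit vector $x$ and apply the Buzano inequality $|\langle a, x\rangle_A \langle x, b\rangle_A| \le \tfrac12\big(\|a\|_A\|b\|_A + |\langle a, b\rangle_A|\big)$ with the choice $a = Tx$ and $b = T^{\sharp_A} x$. Using the defining relation $\langle Tu,v\rangle_A = \langle u, T^{\sharp_A}v\rangle_A$ one has $\langle x, T^{\sharp_A}x\rangle_A = \langle Tx, x\rangle_A$ and $\langle Tx, T^{\sharp_A}x\rangle_A = \langle T^2 x, x\rangle_A$, so the left side collapses to $|\langle Tx, x\rangle_A|^2$ and the inequality reads $2|\langle Tx,x\rangle_A|^2 \le \|Tx\|_A\|T^{\sharp_A}x\|_A + |\langle T^2 x, x\rangle_A|$. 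Bounding $\|Tx\|_A\|T^{\sharp_A}x\|_A \le \|T\|_A\|T^{\sharp_A}\|_A = \|T\|_A^2$ via \eqref{I.S1.0}, and $|\langle T^2 x, x\rangle_A| \le w_A(T^2)$, then taking the supremum over all $A$-unit $x$, delivers the right-hand inequality.

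The hard part is the upper-bound step: one must be certain that the Buzano inequality survives the passage to a merely positive semidefinite form $\langle\cdot,\cdot\rangle_A$. This is in fact harmless, since Buzano's inequality is derived purely from the Cauchy--Schwarz inequality $|\langle u, v\rangle_A| \le \|u\|_A\|v\|_A$, which holds for any positive semidefinite sesquilinear form without any appeal to definiteness. The lower bound, by contrast, is essentially bookkeeping once the two factorizations are written down; the only point requiring care is to route everything through the operators $T \pm T^{\sharp_A}$ rather than $T$ and $T^{\sharp_A}$ individually, so that Proposition \ref{P.7} can be invoked.
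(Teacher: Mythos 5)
Your argument is correct, and both halves take a route that genuinely differs from the paper's, though the differences are modest. For the lower bound the paper, like you, starts from $w_{A}^{2}(T)\geq\tfrac{1}{4}{\|T\pm T^{\sharp_A}\|}_{A}^{2}$ and treats ${\|T^{2}+(T^{\sharp_A})^{2}\|}_{A}$ via the identity $(X+Y)^{2}+(X-Y)^{2}=2(X^{2}+Y^{2})$ exactly as you do; but for ${\|T^{2}-(T^{\sharp_A})^{2}\|}_{A}$ it reruns the argument of Proposition \ref{P.7} for the rotated operators $T\pm iT^{\sharp_A}$ and uses $(X+iY)^{2}+(X-iY)^{2}=2(X^{2}-Y^{2})$, whereas you stay with $T\pm T^{\sharp_A}$ and use the anticommutator identity together with submultiplicativity of ${\|\cdot\|}_{A}$ on $\mathbb{B}_{A}(\mathcal{H})$ (recorded in the introduction, so legitimate); your variant spares you from having to reprove the lower estimate of Proposition \ref{P.7} for $T\pm iT^{\sharp_A}$, and it yields the same constant. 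For the upper bound, the paper performs by hand exactly the computation that Buzano's inequality encapsulates: it writes $|{\langle Tx,x\rangle}_{A}|^{2}=\tfrac{1}{2}{\langle x,\, {\langle x,Tx\rangle}_{A}Tx+{\langle x,T^{\sharp_A}x\rangle}_{A}T^{\sharp_A}x\rangle}_{A}$, applies Cauchy--Schwarz, and expands the resulting norm; your invocation of Buzano with $a=Tx$, $b=T^{\sharp_A}x$, $e=x$ is the same estimate in packaged form. The one point worth an explicit line is the validity of Buzano for the degenerate form ${\langle\cdot,\cdot\rangle}_{A}$: the reflection proof carries over verbatim, since with $Pa={\langle a,x\rangle}_{A}x$ one checks ${\|(2P-I)a\|}_{A}={\|a\|}_{A}$ by direct expansion, after which only the Cauchy--Schwarz inequality for a positive semidefinite form is needed, confirming your remark. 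A small advantage of your route is that you obtain $2|{\langle Tx,x\rangle}_{A}|^{2}\leq{\|T\|}_{A}^{2}+w_{A}(T^{2})$ directly, so you never need to divide by $|{\langle Tx,x\rangle}_{A}|$ and handle the case ${\langle Tx,x\rangle}_{A}=0$ separately, as the paper must.
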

\begin{proof}
It follows from (\ref{I.2.9.T.7}) that $w^2_{A}(T) \geq \frac{1}{4}{\|T \pm T^{\sharp_{A}}\|}^2_{A}$.
Therefore,
\begin{align*}
w^2_{A}(T) &\geq
\frac{1}{8}\Big({\|T + T^{\sharp_{A}}\|}^2_{A}
+ {\|T - T^{\sharp_{A}}\|}^2_{A}\Big)
\\& \geq \frac{1}{8}\Big({\|(T + T^{\sharp_{A}})^2\|}_{A}
+ {\|(T - T^{\sharp_{A}})^2\|}_{A}\Big)
\\& \geq \frac{1}{8}{\|(T + T^{\sharp_{A}})^2 + (T - T^{\sharp_{A}})^2\|}_{A}\\
&= \frac{1}{4}{\|T^2 + (T^{\sharp_{A}})^2\|}_{A},
\end{align*}
and so
\begin{align}\label{I.1.T.8}
\frac{1}{2} {\|T^2 + (T^{\sharp_{A}})^2\|}^{1/2}_{A} \leq w_{A}(T).
\end{align}
Utilizing a similar argument as in Proposition \ref{P.7}, we get
$w^2_{A}(T) \geq \frac{1}{4}{\|T \pm iT^{\sharp_{A}}\|}^2_{A}$.
Hence,
\begin{align*}
w^2_{A}(T) &\geq
\frac{1}{8}\Big({\|T + iT^{\sharp_{A}}\|}^2_{A}
+ {\|T - iT^{\sharp_{A}}\|}^2_{A}\Big)
\\& \geq \frac{1}{8}\Big({\|(T + iT^{\sharp_{A}})^2\|}_{A}
+ {\|(T - iT^{\sharp_{A}})^2\|}_{A}\Big)
\\& \geq \frac{1}{8}{\big\|(T + iT^{\sharp_{A}})^2 + (T - iT^{\sharp_{A}})^2\big\|}_{A}\\
&= \frac{1}{4}{\|T^2 - (T^{\sharp_{A}})^2\|}_{A}.
\end{align*}
Hence
\begin{align}\label{I.2.T.8}
\frac{1}{2} {\|T^2 - (T^{\sharp_{A}})^2\|}^{1/2}_{A} \leq w_{A}(T).
\end{align}
From (\ref{I.1.T.8}) and (\ref{I.2.T.8}), we conclude that
\begin{align*}
\frac{1}{2} \max\Big\{{\|T^2 - (T^{\sharp_{A}})^2\|}^{1/2}_{A}, {\|T^2 + (T^{\sharp_{A}})^2\|}^{1/2}_{A}\Big\}
\leq w_{A}(T).
\end{align*}
Now, let $x \in \mathcal{H}$ with ${\|x\|}_{A} = 1$. Then
\begin{align*}
|{\langle Tx, x\rangle}_{A}|^2 &= \frac{1}{2}\Big(|{\langle Tx, x\rangle}_{A}|^2
+ |{\langle x, T^{\sharp_{A}}x\rangle}_{A}|^2\Big)
\\& = \frac{1}{2}{\Big\langle x, {\langle x, Tx\rangle}_{A} Tx
+ {\langle x, T^{\sharp_{A}}x\rangle}_{A} T^{\sharp_{A}}x\Big\rangle}_{A}
\\& \leq \frac{1}{2}{\Big\|{\langle x, Tx\rangle}_{A} Tx
+ {\langle x, T^{\sharp_{A}}x\rangle}_{A} T^{\sharp_{A}}x\Big\|}_{A}
\\& = \frac{1}{2}\Big(|{\langle x, Tx\rangle}_{A}|^2 {\|Tx\|}^{2}_{A}
+ |{\langle x, T^{\sharp_{A}}x\rangle}_{A}|^2 {\|T^{\sharp_{A}}x\|}^2_{A}
\\& \qquad \qquad
+ 2\mbox{Re}\Big({\langle x, Tx\rangle}_{A}{\langle T^{\sharp_{A}}x, x\rangle}_{A}{\langle Tx, T^{\sharp_{A}}x\rangle}_{A}\Big)\Big)^{1/2}
\\& \leq \frac{1}{2}\Big(|{\langle x, Tx\rangle}_{A}|^2 {\|T\|}^{2}_{A}
+ |{\langle x, T^{\sharp_{A}}x\rangle}_{A}|^2 {\|T^{\sharp_{A}}\|}^2_{A}
\\& \qquad \qquad
+ 2|{\langle x, Tx\rangle}_{A}|\,|{\langle T^{\sharp_{A}}x, x\rangle}_{A}|\,|{\langle Tx, T^{\sharp_{A}}x\rangle}_{A}|\Big)^{1/2}
\\& = \frac{1}{2}\Big(|{\langle Tx, x\rangle}_{A}|^2 {\|T\|}^{2}_{A}
+ |{\langle Tx, x\rangle}_{A}|^2 {\|T\|}^2_{A}
+ 2|{\langle Tx, x\rangle}_{A}|^2\,|{\langle T^2x, x\rangle}_{A}|\Big)^{1/2}
\\& = \frac{\sqrt{2}}{2}|{\langle Tx, x\rangle}_{A}| \Big({\|T\|}^2_{A} + |{\langle T^2x, x\rangle}_{A}|\Big)^{1/2}
\\& \leq \frac{\sqrt{2}}{2}|{\langle Tx, x\rangle}_{A}| \Big({\|T\|}^2_{A} + w_{A}(T^2)\Big)^{1/2}.
\end{align*}
Thus
\begin{align}\label{I.3.9.T.8}
|{\langle Tx, x\rangle}_{A}| \leq \frac{\sqrt{2}}{2}\Big({\|T\|}^2_{A} + w_{A}(T^2)\Big)^{1/2}.
\end{align}
Clearly this inequality holds also when ${\langle Tx, x\rangle}_{A} = 0$.
Taking the supremum over unit vectors $x \in \mathcal{H}$ in (\ref{I.3.9.T.8}), we deduce that
\begin{align*}
w_{A}(T) \leq \frac{\sqrt{2}}{2} \Big({\|T\|}^2_{A} + w_{A}(T^2)\Big)^{1/2}.
\end{align*}
\end{proof}
\begin{remark}\label{R.8.1}
From the above theorem and (\ref{I.S1.1}), we obviously have
\begin{align*}
w_{A}(T) \leq \frac{\sqrt{2}}{2} \Big({\|T\|}^2_{A} + w_{A}(T^2)\Big)^{1/2} \leq {\|T\|}_{A}.
\end{align*}
Thus the second inequality obtained by us in Theorem \ref{T.8} improves the second inequality of (\ref{I.S1.1}).
The following example shows that it is a nontrivial improvement.
Consider $A = \begin{bmatrix}
1 & 0 \\
0 & 2
\end{bmatrix}$ and
$T = \begin{bmatrix}
1 & 2 \\
0 & 1
\end{bmatrix}$.
Then simple computations show that ${\|T\|}_{A} = \sqrt{2 + \sqrt{3}}, w_{A}(T) = \frac{2 + \sqrt{2}}{2}$, and $w_{A}(T^2) = 1 + \sqrt{2}$.
Thus
\begin{align*}
w_{A}(T) \simeq 1.71 <
\frac{\sqrt{2}}{2} \Big({\|T\|}^2_{A} + w_{A}(T^2)\Big)^{1/2} \simeq 1.75
< {\|T\|}_{A} \simeq 1.93.
\end{align*}
\end{remark}
The following lemma will be useful in the proof of the next result.
\begin{lemma}\label{L.9.0}
Let $X, Y\in\mathbb{B}(\mathcal{H})$. Then
\begin{align*}
(XY + YX)^2 + (X^2 + Y^2)^2 = \frac{1}{2}(X + Y)^4 + \frac{1}{2}(X - Y)^4.
\end{align*}
\end{lemma}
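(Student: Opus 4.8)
The plan is to recognise this as an instance of the parallelogram-type cancellation $\tfrac12\big[(a+b)^2+(a-b)^2\big]=a^2+b^2$, which holds for \emph{any} two elements $a,b$ of an associative algebra, regardless of whether they commute, precisely because the mixed terms $ab+ba$ appear with opposite signs in the two squares and cancel upon addition. Accordingly, I would introduce the two auxiliary operators
\[
a:=X^2+Y^2,\qquad b:=XY+YX,
\]
which are exactly the two summands whose squares make up the left-hand side of the asserted identity.

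The first step is to expand the two elementary squares, using only distributivity (no commutativity of $X$ and $Y$ is needed):
\[
(X+Y)^2=X^2+XY+YX+Y^2=a+b,\qquad (X-Y)^2=X^2-XY-YX+Y^2=a-b.
\]
The second step is to square these. Writing $(X+Y)^4=\big((X+Y)^2\big)^2$ and $(X-Y)^4=\big((X-Y)^2\big)^2$, I would expand carefully, retaining both cross terms separately since $a$ and $b$ need not commute:
\[
(a+b)^2=a^2+ab+ba+b^2,\qquad (a-b)^2=a^2-ab-ba+b^2.
\]

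Finally, adding these two expressions and multiplying by $\tfrac12$, the terms $\pm(ab+ba)$ cancel, leaving
\[
\tfrac12(X+Y)^4+\tfrac12(X-Y)^4=a^2+b^2=(X^2+Y^2)^2+(XY+YX)^2,
\]
which is exactly the claimed equality. There is essentially no serious obstacle here; the only point requiring attention is the noncommutativity of $a$ and $b$, so one must not prematurely combine $ab$ and $ba$. Because these cross terms vanish symmetrically in the sum, associativity of the operator product is all that the argument uses, and the identity is valid verbatim in any associative unital algebra.
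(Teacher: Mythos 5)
Your argument is correct and is precisely the direct expansion the paper has in mind when it declares the proof ``trivial'': setting $a=X^2+Y^2$ and $b=XY+YX$ so that $(X\pm Y)^2=a\pm b$, and then noting that the noncommutative cross terms $ab+ba$ cancel in $\tfrac12\big[(a+b)^2+(a-b)^2\big]=a^2+b^2$. Nothing is missing, and your care in not merging $ab$ with $ba$ is exactly the one point worth making explicit.
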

\begin{proof}
The proof is trivial.
\end{proof}
\begin{theorem}\label{T.9}
Let $T\in\mathbb{B}_{A}(\mathcal{H})$. Then
\begingroup\makeatletter\def\f@size{9}\check@mathfonts
\begin{align*}
\Big(\frac{1}{16} {\|TT^{\sharp_{A}} + T^{\sharp_{A}}T\|}^{2}_{A}
+ \frac{1}{16}c_{A}\Big(\big(T^2 + (T^{\sharp_{A}})^2\big)^2\Big)\,\Big)^{1/4}
\leq w_{A}(T) \leq
\Big(\frac{1}{8} {\|TT^{\sharp_{A}} + T^{\sharp_{A}}T\|}^{2}_{A} + \frac{1}{2}w^2_{A}(T^2)\Big)^{1/4}.
\end{align*}
\endgroup
\end{theorem}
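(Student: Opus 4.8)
The plan is to prove the two bounds separately. Throughout set $B := TT^{\sharp_{A}} + T^{\sharp_{A}}T$ and $C := T^2 + (T^{\sharp_{A}})^2$. Both are $A$-selfadjoint: $B$ is a sum of the two $A$-selfadjoint operators $TT^{\sharp_{A}}$ and $T^{\sharp_{A}}T$, and since $(T^2)^{\sharp_{A}} = (T^{\sharp_{A}})^2$ we may write $C = T^2 + (T^2)^{\sharp_{A}}$, which is $A$-selfadjoint because $D + D^{\sharp_{A}}$ is $A$-selfadjoint for every $D \in \mathbb{B}_{A}(\mathcal{H})$. Hence $B^2$ and $C^2$ are $A$-positive and, for every $x \in \mathcal{H}$, we have ${\langle B^2 x, x\rangle}_{A} = {\|Bx\|}^2_{A}$ and ${\langle C^2 x, x\rangle}_{A} = {\|Cx\|}^2_{A} \geq 0$; these identities (valid for any $A$-selfadjoint $S$, since ${\langle S^2 x, x\rangle}_{A} = {\|Sx\|}^2_{A}$ when $AS = S^*A$) will be used repeatedly.

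For the lower bound I would begin with inequality (\ref{I.2.9.T.7}), which gives $w^2_{A}(T) \geq \frac{1}{4}{\|T \pm T^{\sharp_{A}}\|}^2_{A}$; squaring yields $w^4_{A}(T) \geq \frac{1}{16}{\|T \pm T^{\sharp_{A}}\|}^4_{A}$, and averaging the two cases gives
\[
w^4_{A}(T) \geq \frac{1}{32}\Big({\|T + T^{\sharp_{A}}\|}^4_{A} + {\|T - T^{\sharp_{A}}\|}^4_{A}\Big).
\]
Applying the submultiplicativity ${\|S^2\|}_{A} \leq {\|S\|}^2_{A}$ twice gives ${\|T \pm T^{\sharp_{A}}\|}^4_{A} \geq {\|(T \pm T^{\sharp_{A}})^4\|}_{A}$, and the triangle inequality then produces $w^4_{A}(T) \geq \frac{1}{32}{\|(T + T^{\sharp_{A}})^4 + (T - T^{\sharp_{A}})^4\|}_{A}$. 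Now Lemma \ref{L.9.0} with $X = T$ and $Y = T^{\sharp_{A}}$ gives $(T + T^{\sharp_{A}})^4 + (T - T^{\sharp_{A}})^4 = 2(B^2 + C^2)$, whence $w^4_{A}(T) \geq \frac{1}{16}{\|B^2 + C^2\|}_{A}$. Finally, since $B^2 + C^2$ is $A$-positive, for every $A$-unit vector $x$ one has ${\langle (B^2 + C^2)x, x\rangle}_{A} = {\|Bx\|}^2_{A} + {\langle C^2 x, x\rangle}_{A} \geq {\|Bx\|}^2_{A} + c_{A}(C^2)$, using ${\langle C^2 x, x\rangle}_{A} = |{\langle C^2 x, x\rangle}_{A}| \geq c_{A}(C^2)$; taking the supremum over $x$ gives ${\|B^2 + C^2\|}_{A} \geq {\|B\|}^2_{A} + c_{A}(C^2)$, which is precisely the asserted lower bound.

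For the upper bound I would exploit the formula (\ref{I.S1.2}). Put $R_{\theta} := \tfrac{1}{2}\big(e^{i\theta}T + (e^{i\theta}T)^{\sharp_{A}}\big)$; each $R_{\theta}$ is $A$-selfadjoint, so ${\|R_{\theta}\|}^2_{A} = {\|R^2_{\theta}\|}_{A}$ and (\ref{I.S1.2}) reads $w^2_{A}(T) = \sup_{\theta}{\|R^2_{\theta}\|}_{A}$. A direct expansion yields
\[
R^2_{\theta} = \frac{1}{4}B + \frac{1}{4}\big(e^{2i\theta}T^2 + e^{-2i\theta}(T^{\sharp_{A}})^2\big).
\]
Since $R^2_{\theta}$ is $A$-positive and ${\langle (T^{\sharp_{A}})^2 x, x\rangle}_{A} = \overline{{\langle T^2 x, x\rangle}_{A}}$, for each $A$-unit vector $x$ we get
\[
{\langle R^2_{\theta}x, x\rangle}_{A} = \frac{1}{4}{\langle Bx, x\rangle}_{A} + \frac{1}{2}\mathrm{Re}\big(e^{2i\theta}{\langle T^2 x, x\rangle}_{A}\big) \leq \frac{1}{4}{\langle Bx, x\rangle}_{A} + \frac{1}{2}|{\langle T^2 x, x\rangle}_{A}|.
\]
Taking the supremum over $x$ and then over $\theta$ gives $w^2_{A}(T) \leq \sup_{{\|x\|}_{A}=1}\big(\frac{1}{4}{\langle Bx, x\rangle}_{A} + \frac{1}{2}|{\langle T^2 x, x\rangle}_{A}|\big)$. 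Squaring, using $(a + b)^2 \leq 2a^2 + 2b^2$ pointwise and then $\sup_{x}{\langle Bx, x\rangle}^2_{A} = {\|B\|}^2_{A}$ (as $B$ is $A$-positive) together with $\sup_{x}|{\langle T^2 x, x\rangle}_{A}|^2 = w^2_{A}(T^2)$, I obtain $w^4_{A}(T) \leq \frac{1}{8}{\|B\|}^2_{A} + \frac{1}{2}w^2_{A}(T^2)$, as required.

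The algebraic expansions and seminorm manipulations are routine. The step needing the most care is the passage from ${\|B^2 + C^2\|}_{A}$ to ${\|B\|}^2_{A} + c_{A}(C^2)$ in the lower bound: it is exactly here that the $A$-selfadjointness of $B$ and $C$ is essential, guaranteeing both ${\langle B^2 x, x\rangle}_{A} = {\|Bx\|}^2_{A}$ and the nonnegativity ${\langle C^2 x, x\rangle}_{A} \geq 0$ that lets the Crawford number enter. A secondary point to verify carefully is the identity ${\|R_{\theta}\|}^2_{A} = {\|R^2_{\theta}\|}_{A}$ for $A$-selfadjoint operators and the exact cancellation of cross terms in the expansion of $R^2_{\theta}$; once these are in place, everything else follows by taking suprema.
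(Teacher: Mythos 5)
Your proof is correct. The lower bound follows essentially the paper's route, but your upper bound takes a genuinely different and more elementary path, so a comparison is worthwhile. For the lower bound you and the paper use the same three ingredients --- the identity of Lemma \ref{L.9.0}, the bound $w_{A}^{2}(T)\geq\frac{1}{4}{\|T\pm T^{\sharp_{A}}\|}_{A}^{2}$ from (\ref{I.2.9.T.7}), and the Crawford-number minorization --- only traversed in the opposite order; your choice $X=T$, $Y=T^{\sharp_{A}}$ in place of the paper's $X=T^{\sharp_{A}}$, $Y=(T^{\sharp_{A}})^{\sharp_{A}}$ is cleaner, but it obliges you to know that ${\|S^{2}\|}_{A}={\|S\|}_{A}^{2}$ and ${\langle S^{2}x,x\rangle}_{A}={\|Sx\|}_{A}^{2}\geq 0$ for a merely $A$-selfadjoint $S$ rather than for one satisfying $S^{\sharp_{A}}=S$. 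Both facts do hold --- $A(S^{\sharp_{A}}-S)=0$ forces ${\|S^{\sharp_{A}}S-S^{2}\|}_{A}=0$, whence ${\|S^{2}\|}_{A}={\|S^{\sharp_{A}}S\|}_{A}={\|S\|}_{A}^{2}$ --- so this is a point you correctly flag and that is easily filled; the paper sidesteps it by working with the $\sharp_{A}$-twisted operators throughout. For the upper bound the paper again invokes Lemma \ref{L.9.0} with $X=(e^{i\theta}T)^{\sharp_{A}}$, discards the $A$-positive term $\frac{1}{2}(X-Y)^{4}$, and passes through an operator order inequality and norms of fourth powers; you instead expand $R_{\theta}^{2}$ directly, note that the cross terms give the $\theta$-independent operator $\frac{1}{4}\big(TT^{\sharp_{A}}+T^{\sharp_{A}}T\big)$ while the remaining terms contribute $\frac{1}{2}\mathrm{Re}\big(e^{2i\theta}{\langle T^{2}x,x\rangle}_{A}\big)$ (your identity ${\langle (T^{\sharp_{A}})^{2}x,x\rangle}_{A}=\overline{{\langle T^{2}x,x\rangle}_{A}}$ is correct, by applying $AT^{\sharp_{A}}=T^{*}A$ twice), and finish with the scalar inequality $(a+b)^{2}\leq 2a^{2}+2b^{2}$. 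Your version avoids fourth powers of operators altogether and makes transparent where each of the two terms $\frac{1}{8}{\|TT^{\sharp_{A}}+T^{\sharp_{A}}T\|}_{A}^{2}$ and $\frac{1}{2}w_{A}^{2}(T^{2})$ comes from; the paper's version has the advantage of deriving both halves of the theorem from a single application pattern of Lemma \ref{L.9.0} and of never leaving the class of operators for which $S^{\sharp_{A}}=S$.
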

\begin{proof}
Let $x \in \mathcal{H}$ with ${\|x\|}_{A} = 1$.
Applying Lemma \ref{L.9.0}  with $X:=T^{\sharp_{A}}$ and $Y:=(T^{\sharp_{A}})^{\sharp_{A}}$, we get
\begingroup\makeatletter\def\f@size{10}\check@mathfonts
\begin{align*}
\big(T^{\sharp_{A}}(T^{\sharp_{A}})^{\sharp_{A}} + (T^{\sharp_{A}})^{\sharp_{A}}T^{\sharp_{A}}\big)^2
&+ \big((T^{\sharp_{A}})^2 + ((T^{\sharp_{A}})^{\sharp_{A}})^2\big)^2
\\& = \frac{1}{2}\big(T^{\sharp_{A}} + (T^{\sharp_{A}})^{\sharp_{A}}\big)^4
+ \frac{1}{2}\big(T^{\sharp_{A}} - (T^{\sharp_{A}})^{\sharp_{A}}\big)^4.
\end{align*}
\endgroup
Thus
\begingroup\makeatletter\def\f@size{9}\check@mathfonts
\begin{align*}
{\big\langle \big(T^{\sharp_{A}}(T^{\sharp_{A}})^{\sharp_{A}}
+ (T^{\sharp_{A}})^{\sharp_{A}}T^{\sharp_{A}}\big)^2x, x\big\rangle}_{A}
&+ {\big\langle \big((T^{\sharp_{A}})^2 + ((T^{\sharp_{A}})^{\sharp_{A}})^2\big)^2x, x\big\rangle}_{A}
\\& = \frac{1}{2}{\big\langle \big(T^{\sharp_{A}} + (T^{\sharp_{A}})^{\sharp_{A}}\big)^4x, x\big\rangle}_{A}
+ \frac{1}{2}{\big\langle \big(T^{\sharp_{A}} - (T^{\sharp_{A}})^{\sharp_{A}}\big)^4x, x\big\rangle}_{A}.
\end{align*}
\endgroup
This implies that
\begingroup\makeatletter\def\f@size{10}\check@mathfonts
\begin{align*}
{\big\langle \big(T^{\sharp_{A}}(T^{\sharp_{A}})^{\sharp_{A}}
+ (T^{\sharp_{A}})^{\sharp_{A}}T^{\sharp_{A}}\big)^2x, x\big\rangle}_{A}
&+ c_{A}\Big(\big((T^{\sharp_{A}})^2 + ((T^{\sharp_{A}})^{\sharp_{A}})^2\big)^2\Big)
\\& \leq
\frac{1}{2}{\Big\|\big(T^{\sharp_{A}} + (T^{\sharp_{A}})^{\sharp_{A}}\big)^4\Big\|}_{A}
+ \frac{1}{2}{\Big\|\big(T^{\sharp_{A}} - (T^{\sharp_{A}})^{\sharp_{A}}\big)^4\Big\|}_{A}.
\end{align*}
\endgroup
Therefore, we get
\begingroup\makeatletter\def\f@size{10}\check@mathfonts
\begin{align*}
{\big\langle \big(T^{\sharp_{A}}(T^{\sharp_{A}})^{\sharp_{A}}
+ (T^{\sharp_{A}})^{\sharp_{A}}T^{\sharp_{A}}\big)^2x, x\big\rangle}_{A}
&+ c_{A}\Big(\big(T^2 + (T^{\sharp_{A}})^2\big)^2\Big)
\\& \leq \frac{1}{2}{\|T^{\sharp_{A}} + (T^{\sharp_{A}})^{\sharp_{A}}\|}^4_{A}
+ \frac{1}{2}{\|T^{\sharp_{A}} - (T^{\sharp_{A}})^{\sharp_{A}}\|}^4_{A}
\\& = 8{\left\|\frac{T + T^{\sharp_{A}}}{2}\right\|}^4_{A}
+ 8{\left\|\frac{T - T^{\sharp_{A}}}{2}\right\|}^4_{A}.
\end{align*}
\endgroup
It follows from Proposition \ref{P.7} that
\begingroup\makeatletter\def\f@size{10}\check@mathfonts
\begin{align*}
{\big\langle \big(T^{\sharp_{A}}(T^{\sharp_{A}})^{\sharp_{A}}
+ (T^{\sharp_{A}})^{\sharp_{A}}T^{\sharp_{A}}\big)^2x, x\big\rangle}_{A}
+ c_{A}\Big(\big(T^2 + (T^{\sharp_{A}})^2\big)^2\Big)
\leq 8 w^4_{A}(T) + 8 w^4_{A}(T) = 16w^4_{A}(T).
\end{align*}
\endgroup
Taking the supremum over $x \in \mathcal{H}$ with ${\|x\|}_{A} = 1$ in the above inequality, we obtain
\begin{align}\label{I.1.T.9}
\Big(\frac{1}{16} {\|TT^{\sharp_{A}} + T^{\sharp_{A}}T\|}^{2}_{A}
+ \frac{1}{16}c_{A}\Big(\big(T^2 + (T^{\sharp_{A}})^2\big)^2\Big)\,\Big)^{1/4}
\leq w_{A}(T).
\end{align}
Now, let $\theta \in \mathbb{R}$.
Then by letting $X = (e^{i\theta}T)^{\sharp_{A}}$ and $Y = ((e^{i\theta}T)^{\sharp_{A}})^{\sharp_{A}}$
in Lemma \ref{L.9.0}, we have
\begingroup\makeatletter\def\f@size{9}\check@mathfonts
\begin{align*}
\Big((e^{i\theta}T)^{\sharp_{A}}((e^{i\theta}T)^{\sharp_{A}})^{\sharp_{A}}
+ ((e^{i\theta}T)^{\sharp_{A}})^{\sharp_{A}}&(e^{i\theta}T)^{\sharp_{A}}\Big)^2
+ \Big(((e^{i\theta}T)^{\sharp_{A}})^2 + \big(((e^{i\theta}T)^{\sharp_{A}})^{\sharp_{A}}\big)^2\Big)^2
\\& = \frac{1}{2}\Big((e^{i\theta}T)^{\sharp_{A}} + ((e^{i\theta}T)^{\sharp_{A}})^{\sharp_{A}}\Big)^4
+ \frac{1}{2}\Big((e^{i\theta}T)^{\sharp_{A}} - ((e^{i\theta}T)^{\sharp_{A}})^{\sharp_{A}}\Big)^4.
\end{align*}
\endgroup
Since
\begin{align*}
(e^{i\theta}T)^{\sharp_{A}}((e^{i\theta}T)^{\sharp_{A}})^{\sharp_{A}}
+ ((e^{i\theta}T)^{\sharp_{A}})^{\sharp_{A}}(e^{i\theta}T)^{\sharp_{A}}
= T^{\sharp_{A}}(T^{\sharp_{A}})^{\sharp_{A}}
+ (T^{\sharp_{A}})^{\sharp_{A}}T^{\sharp_{A}}
\end{align*}
and $\frac{1}{2}\Big((e^{i\theta}T)^{\sharp_{A}} - ((e^{i\theta}T)^{\sharp_{A}})^{\sharp_{A}}\Big)^4$
is an $A$-positive operator, we have
\begingroup\makeatletter\def\f@size{9}\check@mathfonts
\begin{align*}
\frac{1}{2}\Big((e^{i\theta}T)^{\sharp_{A}} + ((e^{i\theta}T)^{\sharp_{A}})^{\sharp_{A}}\Big)^4
{\leq}_{A} \Big(T^{\sharp_{A}}(T^{\sharp_{A}})^{\sharp_{A}}
+ (T^{\sharp_{A}})^{\sharp_{A}}T^{\sharp_{A}}\Big)^2
+ \Big(((e^{i\theta}T)^{\sharp_{A}})^2 + \big(((e^{i\theta}T)^{\sharp_{A}})^{\sharp_{A}}\big)^2\Big)^2.
\end{align*}
\endgroup
Thus
\begingroup\makeatletter\def\f@size{7}\check@mathfonts
\begin{align*}
\frac{1}{2}{\Big\|\Big((e^{i\theta}T)^{\sharp_{A}} + ((e^{i\theta}T)^{\sharp_{A}})^{\sharp_{A}}\Big)^4\Big\|}_{A}
\leq {\Big\|\Big(T^{\sharp_{A}}(T^{\sharp_{A}})^{\sharp_{A}}
+ (T^{\sharp_{A}})^{\sharp_{A}}T^{\sharp_{A}}\Big)^2\Big\|}_{A}
+ {\Big\|\Big(((e^{i\theta}T)^{\sharp_{A}})^2 + \big(((e^{i\theta}T)^{\sharp_{A}})^{\sharp_{A}}\big)^2\Big)^2\Big\|}_{A}.
\end{align*}
\endgroup
Hence
\begingroup\makeatletter\def\f@size{9}\check@mathfonts
\begin{align*}
\frac{1}{2}{\Big\|(e^{i\theta}T)^{\sharp_{A}} + ((e^{i\theta}T)^{\sharp_{A}})^{\sharp_{A}}\Big\|}^4_{A}
\leq {\Big\|T^{\sharp_{A}}(T^{\sharp_{A}})^{\sharp_{A}}
+ (T^{\sharp_{A}})^{\sharp_{A}}T^{\sharp_{A}}\Big\|}^2_{A}
+ {\Big\|((e^{i\theta}T)^{\sharp_{A}})^2
+ \big(((e^{i\theta}T)^{\sharp_{A}})^{\sharp_{A}}\big)^2\Big\|}^2_{A},
\end{align*}
\endgroup
or equivalently by (\ref{I.S1.0}),
\begin{align*}
{\Big\|\frac{e^{i\theta}T + (e^{i\theta}T)^{\sharp_{A}}}{2}\Big\|}^4_{A}
\leq \frac{1}{8}{\Big\|TT^{\sharp_{A}} + T^{\sharp_{A}}T\Big\|}^2_{A}
+ \frac{1}{2}{\Big\|\frac{(e^{i\theta}T)^2 + \big((e^{i\theta}T)^{\sharp_{A}}\big)^2}{2}\Big\|}^2_{A}.
\end{align*}
This together with (\ref{I.S1.2}) gives
\begin{align*}
w^4_{A}(T) \leq \frac{1}{8} {\|TT^{\sharp_{A}} + T^{\sharp_{A}}T\|}^{2}_{A} + \frac{1}{2}w^2_{A}(T^2),
\end{align*}
whence
\begin{align}\label{I.2.T.9}
w_{A}(T) \leq
\Big(\frac{1}{8} {\|TT^{\sharp_{A}} + T^{\sharp_{A}}T\|}^{2}_{A} + \frac{1}{2}w^2_{A}(T^2)\Big)^{1/4}.
\end{align}
By (\ref{I.1.T.9}) and (\ref{I.2.T.9}), we deduce the desired result.
\end{proof}
\begin{remark}\label{R.9.1}
Since ${\|T\|}^2_{A} \leq {\|TT^{\sharp_{A}} + T^{\sharp_{A}}T\|}_{A} \leq 2{\|T\|}^2_{A}$
and $w_{A}(T^2)\leq {\|T\|}^2_{A}$, we have
\begin{align*}
\frac{1}{2}{\|T\|}_{A} \leq \Big(\frac{1}{16} {\|TT^{\sharp_{A}} + T^{\sharp_{A}}T\|}^{2}_{A}
+ \frac{1}{16}c_{A}\Big(\big(T^2 + (T^{\sharp_{A}})^2\big)^2\Big)\,\Big)^{1/4}
\end{align*}
and
\begin{align*}
\Big(\frac{1}{8} {\|TT^{\sharp_{A}} + T^{\sharp_{A}}T\|}^{2}_{A} + \frac{1}{2}w^2_{A}(T^2)\Big)^{1/4} \leq {\|T\|}_{A}.
\end{align*}
So, the inequalities in Theorem \ref{T.9} improve inequalities (\ref{I.S1.1}).
To see this, let $A = \begin{bmatrix}
1 & -1 \\
-1 & 2
\end{bmatrix}$ and
$T = \begin{bmatrix}
1 & 0 \\
1 & 1
\end{bmatrix}$.
Easy computations show that $w_{A}(T) = 2$, $w_{A}(T^2) = 3$,
$c_{A}\Big(\big(T^2 + (T^{\sharp_{A}})^2\big)^2\Big) = 4$,
${\|T\|}_{A} = \sqrt{3 + 2\sqrt{2}}$, and ${\|TT^{\sharp_{A}} + T^{\sharp_{A}}T\|}_{A} = 10$.
Hence
\begingroup\makeatletter\def\f@size{10}\check@mathfonts
\begin{align*}
\frac{1}{2}{\|T\|}_{A} \simeq 1.21 < \Big(\frac{1}{16} {\|TT^{\sharp_{A}} + T^{\sharp_{A}}T\|}^{2}_{A}
+ \frac{1}{16}c_{A}\Big(\big(T^2 + (T^{\sharp_{A}})^2\big)^2\Big)\,\Big)^{1/4} \simeq 1.60 < w_{A}(T) = 2
\end{align*}
\endgroup
and
\begingroup\makeatletter\def\f@size{10}\check@mathfonts
\begin{align*}
w_{A}(T) = 2 <
\Big(\frac{1}{8} {\|TT^{\sharp_{A}} + T^{\sharp_{A}}T\|}^{2}_{A} + \frac{1}{2}w^2_{A}(T^2)\Big)^{1/4} \simeq 2.03
< {\|T\|}_{A} \simeq 2.41.
\end{align*}
\endgroup
\end{remark}
\begin{remark}\label{R.9.2}
Very recently, as our work was in progress, the second inequality
in Theorem \ref{T.9} has been proved by Bhunia, Paul, and  Nayak in \cite{B.P.N}.
Our approach here is different from theirs.
\end{remark}
Let $T\in\mathbb{B}_{A}(\mathcal{H})$.
By inequalities (\ref{I.S1.1}), we have
\begin{align}\label{I.0.T.10}
w_{A}(T^2) \leq {\|T^2\|}_{A} \leq {\|T\|}^2_{A} \leq 4w^2_{A}(T).
\end{align}
In the following result, we improve inequalities (\ref{I.0.T.10}).
\begin{proposition}\label{P.11}
Let $T\in\mathbb{B}_{A}(\mathcal{H})$. Then
$w_{A}(T^2) \leq w^2_{A}(T)$.
\end{proposition}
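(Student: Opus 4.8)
The plan is to establish the pointwise bound ${|\langle T^2 x, x\rangle}_A| \le w_A^2(T)$ for every $A$-unit vector $x$, and then pass to the supremum. First I would use a phase reduction: for a fixed $x$, replacing $T$ by $e^{i\phi}T$ leaves $w_A$ unchanged, since $|{\langle e^{i\phi}Tx, x\rangle}_A| = |{\langle Tx, x\rangle}_A|$, while it multiplies ${\langle T^2 x, x\rangle}_A$ by $e^{2i\phi}$. Choosing $\phi$ suitably lets me assume ${\langle T^2 x, x\rangle}_A = \mu := |{\langle T^2 x, x\rangle}_A| \ge 0$. This step relies on the scalar rule $(e^{i\phi}T)^{\sharp_A} = e^{-i\phi}T^{\sharp_A}$, which is immediate from $T^{\sharp_A} = A^{\dag}T^*A$, so the reduction costs nothing.

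The crux is to evaluate, for $\theta \in \mathbb{R}$, the quantity ${\|\mathcal{C}_\theta x\|}^2_A$, where $\mathcal{C}_\theta := \frac{e^{i\theta}T + (e^{i\theta}T)^{\sharp_A}}{2}$ is precisely the operator appearing in (\ref{I.S1.2}). Expanding the inner product and invoking the adjunction identity ${\langle T^2 x, x\rangle}_A = {\langle Tx, T^{\sharp_A}x\rangle}_A$ (together with its conjugate), the two cross terms collapse to $\tfrac{\mu}{2}(e^{2i\theta} + e^{-2i\theta})/2$, giving
\[
{\|\mathcal{C}_\theta x\|}^2_A = \frac{1}{4}\big({\|Tx\|}^2_A + {\|T^{\sharp_A}x\|}^2_A\big) + \frac{\mu}{2}\cos 2\theta .
\]

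Specializing to $\theta = 0$ and $\theta = \tfrac{\pi}{2}$ and subtracting yields the clean identity $\mu = {\|\mathcal{C}_0 x\|}^2_A - {\|\mathcal{C}_{\pi/2}x\|}^2_A$. Since the subtracted term is a nonnegative squared $A$-seminorm, I obtain $\mu \le {\|\mathcal{C}_0 x\|}^2_A \le {\|\mathcal{C}_0\|}^2_A \le w_A^2(T)$, where the final inequality is exactly (\ref{I.S1.2}). Because $x$ is an arbitrary $A$-unit vector, taking the supremum over such $x$ delivers $w_A(T^2) \le w_A^2(T)$.

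The routine portions are the cross-term cancellation and the careful tracking of conjugate-linearity of ${\langle\cdot,\cdot\rangle}_A$ in its second slot. I expect the only genuinely delicate point to be recognizing the right operators to feed into (\ref{I.S1.2})—namely identifying $\mathcal{C}_\theta$ and exploiting the two specific angles $\theta = 0, \tfrac{\pi}{2}$—and verifying that the phase reduction is legitimate (phase-invariance of $w_A$ and the scalar behaviour of $\sharp_A$); once these are in place, everything else follows from (\ref{I.S1.2}) and the positivity of the $A$-seminorm.
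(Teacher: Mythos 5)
Your argument is correct, and it is genuinely different from the one in the paper. You derive the pointwise bound $|{\langle T^2x, x\rangle}_{A}|\leq w_{A}^{2}(T)$ from the representation (\ref{I.S1.2}): after the (legitimate) phase reduction, the identity ${\langle T^2x,x\rangle}_{A}={\langle Tx,T^{\sharp_A}x\rangle}_{A}$ makes the cross terms in ${\|\mathcal{C}_{\theta}x\|}_{A}^{2}$ produce $\frac{\mu}{2}\cos 2\theta$, and comparing $\theta=0$ with $\theta=\pi/2$ exhibits $\mu$ as a difference of two squared $A$-seminorms, the larger of which is dominated by $w_{A}^{2}(T)$ via (\ref{I.S1.2}). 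The paper instead proves the stronger pointwise estimate ${\|Tx\|}_{A}^{2}+|{\langle T^2x,x\rangle}_{A}|\leq 2w_{A}(T){\|Tx\|}_{A}$ by a polarization argument on the vectors $x\pm e^{-i\theta/2}Tx/{\|Tx\|}_{A}$ (with a separate case for ${\|Tx\|}_{A}=0$) and then completes the square, obtaining $|{\langle T^2x,x\rangle}_{A}|\leq w_{A}^{2}(T)-\big(w_{A}(T)-{\|Tx\|}_{A}\big)^{2}$. Your route is shorter, avoids the case distinction, and leans on the already-established formula (\ref{I.S1.2}); the paper's route is self-contained at the level of the definition of $w_{A}$ and yields a quantitatively sharper intermediate inequality as a byproduct. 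Both uses of $T\in\mathbb{B}_{A}(\mathcal{H})$ (existence of $T^{\sharp_A}$, validity of (\ref{I.S1.2})) are in order, and your bookkeeping of the conjugate-linearity in the second slot and of Cauchy--Schwarz for the semi-inner product checks out.
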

\begin{proof}
First, let us show that
\begin{align}\label{I.0.T.11}
{\|Tx\|}^2_{A} + |{\langle T^2x, x\rangle}_{A}| \leq 2 w_{A}(T){\|Tx\|}_{A} \qquad (x \in \mathcal{H}, {\|x\|}_{A} = 1).
\end{align}
Let $x \in \mathcal{H}$ with ${\|x\|}_{A} = 1$.
We consider two cases.

Case 1: ${\|Tx\|}_{A} = 0$. Then
\begin{align*}
|{\langle T^2x, x\rangle}_{A}| = |{\langle Tx, T^{\sharp_{A}}x\rangle}_{A}| \leq {\|Tx\|}_{A}\, {\|T^{\sharp_{A}}x\|}_{A} = 0,
\end{align*}
and so $|{\langle T^2x, x\rangle}_{A}| = 0$. Thus (\ref{I.0.T.11}) is satisfied.

Case 2: ${\|Tx\|}_{A} \neq 0$.
Let $\theta \in \mathbb{R}$ such that ${\langle T^2x, x\rangle}_{A} = e^{i\theta} |{\langle T^2x, x\rangle}_{A}|$.
We have
\begingroup\makeatletter\def\f@size{10}\check@mathfonts
\begin{align*}
{\|Tx\|}_{A} + \frac{|{\langle T^2x, x\rangle}_{A}|}{{\|Tx\|}_{A}}
&= \frac{{\langle Tx, Tx\rangle}_{A}}{{\|Tx\|}_{A}} + \frac{e^{-i\theta} {\langle T^2x, x\rangle}_{A}}{{\|Tx\|}_{A}}
\\& = \left|e^{i\frac{\theta}{2}}{\left\langle Tx, T\left(\frac{x}{{\|Tx\|}_{A}}\right)\right\rangle}_{A}
+ e^{i\frac{\theta}{2}}{\left\langle T^2\left(\frac{e^{-i\theta}x}{{\|Tx\|}_{A}}\right), x\right\rangle}_{A}\right|
\\& = \frac{1}{2}\Big|{\left\langle T\left(x + e^{-i\frac{\theta}{2}}\frac{Tx}{{\|Tx\|}_{A}}\right),
\left(x + e^{-i\frac{\theta}{2}}\frac{Tx}{{\|Tx\|}_{A}}\right)\right\rangle}_{A}
\\& \qquad \qquad \qquad - {\left\langle T\left(x - e^{-i\frac{\theta}{2}}\frac{Tx}{{\|Tx\|}_{A}}\right),
\left(x - e^{-i\frac{\theta}{2}}\frac{Tx}{{\|Tx\|}_{A}}\right)\right\rangle}_{A}\Big|
\\& \leq \frac{1}{2}\left|{\left\langle T\left(x + e^{-i\frac{\theta}{2}}\frac{Tx}{{\|Tx\|}_{A}}\right),
\left(x + e^{-i\frac{\theta}{2}}\frac{Tx}{{\|Tx\|}_{A}}\right)\right\rangle}_{A}\right|
\\& \qquad \qquad + \frac{1}{2}\left|{\left\langle T\left(x - e^{-i\frac{\theta}{2}}\frac{Tx}{{\|Tx\|}_{A}}\right),
\left(x - e^{-i\frac{\theta}{2}}\frac{Tx}{{\|Tx\|}_{A}}\right)\right\rangle}_{A}\right|
\\& \leq \frac{1}{2}w_{A}(T){\left\|x + e^{-i\frac{\theta}{2}}\frac{Tx}{{\|Tx\|}_{A}}\right\|}^2_{A}
+ \frac{1}{2}w_{A}(T){\left\|x - e^{-i\frac{\theta}{2}}\frac{Tx}{{\|Tx\|}_{A}}\right\|}^2_{A}
\\& = \frac{1}{2}w_{A}(T)\Big({\left\|x + e^{-i\frac{\theta}{2}}\frac{Tx}{{\|Tx\|}_{A}}\right\|}^2_{A}
+ {\left\|x - e^{-i\frac{\theta}{2}}\frac{Tx}{{\|Tx\|}_{A}}\right\|}^2_{A}\Big)
\\& = 2w_{A}(T),
\end{align*}
\endgroup
and hence
${\|Tx\|}_{A} + \frac{|{\langle T^2x, x\rangle}_{A}|}{{\|Tx\|}_{A}} \leq 2w_{A}(T)$.
Therefore (\ref{I.0.T.11}) is satisfied.

Now, from (\ref{I.0.T.11}) for $x \in \mathcal{H}$ with ${\|x\|}_{A} = 1$, we get
\begin{align*}
0 &\leq 2 w_{A}(T){\|Tx\|}_{A} - {\|Tx\|}^2_{A} - |{\langle T^2x, x\rangle}_{A}|
\\& = w^2_{A}(T) - \big( w_{A}(T) - {\|Tx\|}_{A}\big)^2 - |{\langle T^2x, x\rangle}_{A}|
\\& \leq w^2_{A}(T) - |{\langle T^2x, x\rangle}_{A}|,
\end{align*}
which implies $|{\langle T^2x, x\rangle}_{A}| \leq  w^2_{A}(T)$.
Taking the supremum over unit vectors $x \in \mathcal{H}$, we deduce that
$w_{A}(T^2) \leq w^2_{A}(T)$.
\end{proof}
The following result is a reverse type inequality of the inequality in Proposition \ref{P.11}.
\begin{theorem}\label{T.11.5}
Let $T\in\mathbb{B}_{A}(\mathcal{H})$. Then
\begin{align*}
w^2_{A}(T) \leq w_{A}(T^2) + \frac{1}{2} \min\Big\{{\|T - T^{\sharp_{A}}\|}^2_{A}, {\|T + T^{\sharp_{A}}\|}^2_{A}\Big\}.
\end{align*}
\end{theorem}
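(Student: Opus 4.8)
The plan is to exploit a Cartesian-type decomposition of $T$ relative to its distinguished $A$-adjoint. Set $P := \frac{T + T^{\sharp_{A}}}{2}$ and $Q := \frac{T - T^{\sharp_{A}}}{2i}$, so that $T = P + iQ$. A direct check using $AT^{\sharp_{A}} = T^*A$ together with $A^* = A$ shows that $AP$ and $AQ$ are selfadjoint; that is, $P$ and $Q$ are $A$-selfadjoint. Consequently, for every $A$-unit vector $x$ the scalars ${\langle Px, x\rangle}_{A} = \langle APx, x\rangle$ and ${\langle Qx, x\rangle}_{A}$ are real, whence $|{\langle Tx, x\rangle}_{A}|^2 = {\langle Px, x\rangle}_{A}^2 + {\langle Qx, x\rangle}_{A}^2$.

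The key step is the identity, valid for every $A$-unit vector $x$,
\[
\mbox{Re}\,{\langle T^2 x, x\rangle}_{A} = {\|Px\|}_{A}^2 - {\|Qx\|}_{A}^2 .
\]
To obtain it I would expand $T^2 = (P^2 - Q^2) + i(PQ + QP)$ and use the $A$-selfadjointness of $P$ and $Q$ in the forms $AP^2 = P^*AP$ and $AQ^2 = Q^*AQ$, which give ${\langle P^2 x, x\rangle}_{A} = {\|Px\|}_{A}^2$ and ${\langle Q^2 x, x\rangle}_{A} = {\|Qx\|}_{A}^2$; a parallel computation shows ${\langle (PQ + QP)x, x\rangle}_{A} = 2\mbox{Re}\,{\langle Px, Qx\rangle}_{A}\in\mathbb{R}$, so that the cross term contributes only to the imaginary part of ${\langle T^2 x, x\rangle}_{A}$.

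With the identity in hand, I bound $|{\langle Tx, x\rangle}_{A}|^2 = {\langle Px, x\rangle}_{A}^2 + {\langle Qx, x\rangle}_{A}^2 \le {\|Px\|}_{A}^2 + {\|Qx\|}_{A}^2$ by the Cauchy--Schwarz inequality for ${\langle\cdot,\cdot\rangle}_{A}$, and then use the identity to eliminate one square. Substituting ${\|Px\|}_{A}^2 = {\|Qx\|}_{A}^2 + \mbox{Re}\,{\langle T^2 x, x\rangle}_{A}$ yields $|{\langle Tx, x\rangle}_{A}|^2 \le 2{\|Qx\|}_{A}^2 + \mbox{Re}\,{\langle T^2 x, x\rangle}_{A} \le 2{\|Q\|}_{A}^2 + w_{A}(T^2)$, while substituting ${\|Qx\|}_{A}^2 = {\|Px\|}_{A}^2 - \mbox{Re}\,{\langle T^2 x, x\rangle}_{A}$ yields $|{\langle Tx, x\rangle}_{A}|^2 \le 2{\|Px\|}_{A}^2 + w_{A}(T^2)$; here I use ${\|Qx\|}_{A}\le{\|Q\|}_{A}$, ${\|Px\|}_{A}\le{\|P\|}_{A}$, and $\pm\mbox{Re}\,{\langle T^2 x, x\rangle}_{A} \le |{\langle T^2 x, x\rangle}_{A}| \le w_{A}(T^2)$. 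Taking the supremum over $A$-unit vectors $x$, recording that ${\|T + T^{\sharp_{A}}\|}_{A} = 2{\|P\|}_{A}$ and ${\|T - T^{\sharp_{A}}\|}_{A} = 2{\|Q\|}_{A}$ (so that $2{\|P\|}_{A}^2 = \frac{1}{2}{\|T + T^{\sharp_{A}}\|}_{A}^2$ and $2{\|Q\|}_{A}^2 = \frac{1}{2}{\|T - T^{\sharp_{A}}\|}_{A}^2$), and finally keeping the smaller of the two bounds gives the asserted inequality.

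The main obstacle is establishing the boxed identity cleanly in the degenerate (semi-Hilbertian) setting: one must verify that ${\langle Px, x\rangle}_{A}$ and ${\langle Qx, x\rangle}_{A}$ are genuinely real and that the $A$-numerical value of $T^2$ has real part exactly ${\|Px\|}_{A}^2 - {\|Qx\|}_{A}^2$. This rests entirely on the algebraic relation $AP = P^*A$ (and its analogue for $Q$) rather than on $P = P^{\sharp_{A}}$, which need not hold since $T^{\sharp_{A}}$ carries the projection onto $\overline{\mathcal{R}(A)}$; keeping every computation at the level of $\langle A(\cdot)x, x\rangle$ rather than at the level of $\sharp_{A}$-adjoints is what makes the argument go through without closed-range hypotheses.
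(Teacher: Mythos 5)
Your proof is correct, and it takes a genuinely different route from the paper's. The paper starts from the known bound $2w^2_{A}(T)\le{\|TT^{\sharp_{A}}+T^{\sharp_{A}}T\|}_{A}$ (cited from \cite[Theorem 2.10]{Z.3}) and then proves the operator-level estimate ${\|TT^{\sharp_{A}}+T^{\sharp_{A}}T\|}_{A}\le 2w_{A}(T^2)+{\|T\pm T^{\sharp_{A}}\|}^2_{A}$ by rewriting $TT^{\sharp_{A}}+T^{\sharp_{A}}T$ as $\big(T^{\sharp_{A}}\pm(T^{\sharp_{A}})^{\sharp_{A}}\big)^{\sharp_{A}}\big(T^{\sharp_{A}}\pm(T^{\sharp_{A}})^{\sharp_{A}}\big)\mp\big(((T^{\sharp_{A}})^{\sharp_{A}})^2+(T^{\sharp_{A}})^2\big)$ and invoking the subadditivity of $w_{A}$ together with the fact that $w_{A}$ and ${\|\cdot\|}_{A}$ agree on $A$-selfadjoint operators. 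You instead work pointwise with the Cartesian decomposition $T=P+iQ$, the reality of ${\langle Px,x\rangle}_{A}$ and ${\langle Qx,x\rangle}_{A}$, the identity $\mathrm{Re}\,{\langle T^2x,x\rangle}_{A}={\|Px\|}^2_{A}-{\|Qx\|}^2_{A}$, and Cauchy--Schwarz for the semi-inner product; all of your intermediate claims check out (in particular $AP^2=P^*AP$ and the cross term $PQ+QP$ contributing only to the imaginary part), and the constants $2{\|P\|}^2_{A}=\tfrac{1}{2}{\|T+T^{\sharp_{A}}\|}^2_{A}$, $2{\|Q\|}^2_{A}=\tfrac{1}{2}{\|T-T^{\sharp_{A}}\|}^2_{A}$ are right. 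What your approach buys is self-containedness and elementarity: it avoids the external citation and all manipulation of iterated $\sharp_{A}$-adjoints, and it actually establishes the stronger pointwise inequality $|{\langle Tx,x\rangle}_{A}|^2\le\mathrm{Re}\,{\langle T^2x,x\rangle}_{A}+2{\|Qx\|}^2_{A}$ before taking suprema. What the paper's approach buys is the intermediate operator inequality ${\|TT^{\sharp_{A}}+T^{\sharp_{A}}T\|}_{A}\le 2w_{A}(T^2)+{\|T\pm T^{\sharp_{A}}\|}^2_{A}$, which is of independent interest and ties the result to the quantity ${\|TT^{\sharp_{A}}+T^{\sharp_{A}}T\|}_{A}$ featuring in Theorem \ref{T.9}.
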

\begin{proof}
First observe that, by \cite[Theorem 2.10]{Z.3}, we have
\begin{align}\label{I.1.T.11.5}
2w^2_{A}(T) \leq {\|TT^{\sharp_{A}} + T^{\sharp_{A}}T\|}_{A}.
\end{align}
Moreover,
\begingroup\makeatletter\def\f@size{9}\check@mathfonts
\begin{align*}
{\|TT^{\sharp_{A}} + T^{\sharp_{A}}T\|}_{A}
& = {\|(T^{\sharp_{A}})^{\sharp_{A}}T^{\sharp_{A}} + T^{\sharp_{A}} (T^{\sharp_{A}})^{\sharp_{A}}\|}_{A}
\qquad \qquad \Big(\mbox{by (\ref{I.S1.0})}\Big)
\\& = w_{A}\Big((T^{\sharp_{A}})^{\sharp_{A}}T^{\sharp_{A}} + T^{\sharp_{A}} (T^{\sharp_{A}})^{\sharp_{A}}\Big)
\\& \qquad \qquad \Big(\mbox{since $(T^{\sharp_{A}})^{\sharp_{A}}T^{\sharp_{A}} + T^{\sharp_{A}} (T^{\sharp_{A}})^{\sharp_{A}}$
is an $A$-selfadjoint operator}\Big)
\\& = w_{A}\Big(\big(T^{\sharp_{A}} \pm (T^{\sharp_{A}})^{\sharp_{A}}\big)^{\sharp_{A}}\big(T^{\sharp_{A}} \pm (T^{\sharp_{A}})^{\sharp_{A}}\big)
\mp \big((T^{\sharp_{A}})^{\sharp_{A}}(T^{\sharp_{A}})^{\sharp_{A}} + T^{\sharp_{A}}T^{\sharp_{A}}\big)\Big)
\\& \leq w_{A}\Big(\big(T^{\sharp_{A}} \pm (T^{\sharp_{A}})^{\sharp_{A}}\big)^{\sharp_{A}}\big(T^{\sharp_{A}} \pm (T^{\sharp_{A}})^{\sharp_{A}}\big)\Big)
+ w_{A}\Big((T^{\sharp_{A}})^{\sharp_{A}}(T^{\sharp_{A}})^{\sharp_{A}}\Big)
+ w_{A}(T^{\sharp_{A}}T^{\sharp_{A}}\big)
\\& = {\Big\|\big(T^{\sharp_{A}} \pm (T^{\sharp_{A}})^{\sharp_{A}}\big)^{\sharp_{A}}\big(T^{\sharp_{A}} \pm (T^{\sharp_{A}})^{\sharp_{A}}\big)\Big\|}_{A}
+ w_{A}(T^2) + w_{A}(T^2)
\\&\qquad \big(\mbox{since
$\big(T^{\sharp_{A}} \pm (T^{\sharp_{A}})^{\sharp_{A}}\big)^{\sharp_{A}}\big(T^{\sharp_{A}} \pm (T^{\sharp_{A}})^{\sharp_{A}}\big)$
is an $A$-selfadjoint operator}\big)
\\& = {\Big\|T^{\sharp_{A}} \pm (T^{\sharp_{A}})^{\sharp_{A}}\Big\|}^2_{A}
+ 2 w_{A}(T^2)
= {\Big\|T \pm T^{\sharp_{A}}\Big\|}^2_{A} + 2 w_{A}(T^2), \qquad \Big(\mbox{by (\ref{I.S1.0})}\Big)
\end{align*}
\endgroup
and so
\begin{align}\label{I.2.T.11.5}
{\|TT^{\sharp_{A}} + T^{\sharp_{A}}T\|}_{A} \leq 2 w_{A}(T^2) + {\Big\|T \pm T^{\sharp_{A}}\Big\|}^2_{A}.
\end{align}
Finally, by (\ref{I.1.T.11.5}) and (\ref{I.2.T.11.5}), we conclude that
\begin{align*}
w^2_{A}(T) \leq w_{A}(T^2) + \frac{1}{2} \min\Big\{{\|T - T^{\sharp_{A}}\|}^2_{A}, {\|T + T^{\sharp_{A}}\|}^2_{A}\Big\}.
\end{align*}
\end{proof}
For $T, S\in\mathbb{B}_{A}(\mathcal{H})$, we clearly have $w_{A}(T + S) \leq w_{A}(T) + w_{A}(S)$.
The following theorem deals with the equality $w_{A}(T + S) = w_{A}(T) + w_{A}(S)$.
\begin{theorem}\label{T.12}
Let $T, S\in\mathbb{B}_{A}(\mathcal{H})$. Then the following conditions are equivalent:
\begin{itemize}
\item[(i)] $w_{A}(T + S) = w_{A}(T) + w_{A}(S)$.
\item[(ii)] There exists a sequence of $A$-unit vectors $\{x_n\}$ in $\mathcal{H}$ such that
\begin{align*}
\displaystyle{\lim_{n\rightarrow +\infty}}{\langle x_n, Tx_n\rangle}_{A} {\langle Sx_n, x_n\rangle}_{A}
= w_{A}(T)\,w_{A}(S).
\end{align*}
\end{itemize}
\end{theorem}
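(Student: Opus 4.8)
The plan is to mimic the proof of Theorem \ref{T.4}, replacing the seminorm ${\|\cdot\|}_{A}$ by the seminorm $w_{A}(\cdot)$ and exploiting that $w_{A}$ is realized as a supremum of the moduli $|{\langle Tx, x\rangle}_{A}|$ over $A$-unit vectors. Throughout, for an $A$-unit sequence $\{x_n\}$ I would write $a_n := {\langle Tx_n, x_n\rangle}_{A}$ and $b_n := {\langle Sx_n, x_n\rangle}_{A}$, so that $|a_n| \leq w_{A}(T)$ and $|b_n| \leq w_{A}(S)$ for every $n$. The key algebraic observation is that ${\langle x_n, Tx_n\rangle}_{A} = \overline{a_n}$, so the product appearing in (ii) equals $\overline{a_n}\,b_n$, and moreover $\mbox{Re}(\overline{a_n}b_n) = \mbox{Re}(a_n\overline{b_n})$, which is what lets me pass between the expansion of $|a_n + b_n|^2$ and the quantity in (ii).

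For (i)$\Rightarrow$(ii): since $w_{A}(T + S) = w_{A}(T) + w_{A}(S)$, the definition of the $A$-numerical radius furnishes a sequence of $A$-unit vectors $\{x_n\}$ with $\displaystyle{\lim_{n\rightarrow+\infty}} |a_n + b_n| = w_{A}(T) + w_{A}(S)$. From the chain $|a_n + b_n| \leq |a_n| + |b_n| \leq w_{A}(T) + w_{A}(S)$ a squeeze argument gives $\lim_n |a_n| = w_{A}(T)$ and $\lim_n |b_n| = w_{A}(S)$, and comparing $|a_n + b_n|^2 = |a_n|^2 + |b_n|^2 + 2\mbox{Re}(\overline{a_n}b_n)$ with $(|a_n| + |b_n|)^2$ forces $\lim_n \mbox{Re}(\overline{a_n}b_n) = w_{A}(T)\,w_{A}(S)$. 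Since $|\overline{a_n}b_n| = |a_n||b_n| \to w_{A}(T)\,w_{A}(S)$ as well, the imaginary part $\mbox{Im}(\overline{a_n}b_n)$ must tend to $0$, whence $\lim_n \overline{a_n}b_n = w_{A}(T)\,w_{A}(S)$, which is precisely (ii).

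For (ii)$\Rightarrow$(i): starting from $\lim_n \overline{a_n}b_n = w_{A}(T)\,w_{A}(S)$, taking moduli gives $\lim_n |a_n||b_n| = w_{A}(T)\,w_{A}(S)$; together with $|a_n| \leq w_{A}(T)$ and $|b_n| \leq w_{A}(S)$ this again forces $\lim_n |a_n| = w_{A}(T)$ and $\lim_n |b_n| = w_{A}(S)$, after discarding the trivial case $w_{A}(T)\,w_{A}(S) = 0$, in which (i) holds automatically because $w_{A}$ is a seminorm. Using $\mbox{Re}(\overline{a_n}b_n) \to w_{A}(T)\,w_{A}(S)$ in the expansion $|a_n + b_n|^2 = |a_n|^2 + |b_n|^2 + 2\mbox{Re}(\overline{a_n}b_n)$ yields $\lim_n |a_n + b_n| = w_{A}(T) + w_{A}(S)$, so that $w_{A}(T + S) \geq w_{A}(T) + w_{A}(S)$; combined with the elementary subadditivity $w_{A}(T + S) \leq w_{A}(T) + w_{A}(S)$ this gives equality.

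The genuinely delicate point is the complex-phase bookkeeping: unlike the inner-product setting of Theorem \ref{T.4}, here $a_n$ and $b_n$ are scalars whose arguments need not stabilize along the sequence, so I must work consistently with $\mbox{Re}$ and $\mbox{Im}$ and a modulus-plus-real-part squeeze rather than extracting a single unimodular phase factor. The only other thing to watch is the degenerate case $w_{A}(T)\,w_{A}(S) = 0$, which I would dispose of at the very start of the direction (ii)$\Rightarrow$(i).
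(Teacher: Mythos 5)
Your proof is correct and follows exactly the route the paper intends: the authors omit the proof of Theorem \ref{T.12}, stating only that it is ``similar to that of Theorem \ref{T.4},'' and your argument is precisely that adaptation, with the scalar quantities ${\langle Tx_n, x_n\rangle}_{A}$ and ${\langle Sx_n, x_n\rangle}_{A}$ playing the role of the vectors $Tx_n$ and $Sx_n$. The two points you flag as delicate --- the real/imaginary-part squeeze in place of a single phase extraction, and the degenerate case $w_{A}(T)\,w_{A}(S)=0$ in the direction (ii)$\Rightarrow$(i) --- are handled correctly and are exactly where care is needed.
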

\begin{proof}
The proof is similar to that of Theorem \ref{T.4} and so we omit it.
\end{proof}
If $T, S\in\mathbb{B}_{A}(\mathcal{H})$, then Proposition \ref{P.11} ensures that
\begin{align*}
w_{A}(T^2 + S^2) \leq w_{A}(T^2) + w_{A}(S^2)
\leq w^2_{A}(T) + w^2_{A}(S)
\leq 2\max\big\{w^2_{A}(T), w^2_{A}(S)\big\},
\end{align*}
and hence
\begin{align}\label{I.0.T.13}
w_{A}(T^2 + S^2) \leq 2\max\big\{w^2_{A}(T), w^2_{A}(S)\big\}.
\end{align}
Finally, we state a condition for the equality in (\ref{I.0.T.13}) by applying Theorem \ref{T.12}.
\begin{theorem}\label{T.13}
Let $T, S\in\mathbb{B}_{A}(\mathcal{H})$. Then the following conditions are equivalent:
\begin{itemize}
\item[(i)] $w_{A}(T^2 + S^2) = 2\max\big\{w^2_{A}(T), w^2_{A}(S)\big\}$.
\item[(ii)] There exists a sequence of $A$-unit vectors $\{x_n\}$ in $\mathcal{H}$ such that
\begin{align*}
\displaystyle{\lim_{n\rightarrow +\infty}}{\langle x_n, T^2x_n\rangle}_{A} {\langle S^2x_n, x_n\rangle}_{A}
= \max\big\{w^4_{A}(T), w^4_{A}(S)\big\}.
\end{align*}
\end{itemize}
\end{theorem}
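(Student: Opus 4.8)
The plan is to mimic the proof of Theorem \ref{T.5}, reducing everything to Theorem \ref{T.12} applied to the operators $T^2$ and $S^2$ in place of $T$ and $S$ (note that $T^2, S^2 \in \mathbb{B}_{A}(\mathcal{H})$ whenever $T, S \in \mathbb{B}_{A}(\mathcal{H})$, so Theorem \ref{T.12} is available for them). The backbone is the chain of inequalities displayed just before the statement, namely
\[
w_{A}(T^2 + S^2) \leq w_{A}(T^2) + w_{A}(S^2) \leq w^2_{A}(T) + w^2_{A}(S) \leq 2\max\big\{w^2_{A}(T), w^2_{A}(S)\big\},
\]
whose first step is subadditivity of $w_{A}(\cdot)$, second step is Proposition \ref{P.11}, and third step is the elementary bound $a+b \leq 2\max\{a,b\}$ for nonnegative reals.

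For (i)$\Rightarrow$(ii), I would observe that equality in (i) forces all three inequalities above to be equalities. Equality in the third one yields $w_{A}(T) = w_{A}(S)$, so that $\max\big\{w^2_{A}(T), w^2_{A}(S)\big\} = w^2_{A}(T) = w^2_{A}(S)$. Equality in the second, combined with the individual bounds $w_{A}(T^2) \leq w^2_{A}(T)$ and $w_{A}(S^2) \leq w^2_{A}(S)$ of Proposition \ref{P.11}, forces $w_{A}(T^2) = w^2_{A}(T)$ and $w_{A}(S^2) = w^2_{A}(S)$. Finally, equality in the first, $w_{A}(T^2 + S^2) = w_{A}(T^2) + w_{A}(S^2)$, lets me invoke Theorem \ref{T.12} applied to $T^2, S^2$ to produce a sequence of $A$-unit vectors $\{x_n\}$ with $\lim_{n\rightarrow +\infty}{\langle x_n, T^2 x_n\rangle}_{A}{\langle S^2 x_n, x_n\rangle}_{A} = w_{A}(T^2)\, w_{A}(S^2)$. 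Rewriting the right-hand side as $w^2_{A}(T)\, w^2_{A}(S) = \max\big\{w^4_{A}(T), w^4_{A}(S)\big\}$, using $w_{A}(T) = w_{A}(S)$, yields exactly (ii).

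For (ii)$\Rightarrow$(i), I would run the same chain in reverse. Starting from the limit in (ii) and taking absolute values, the general bounds $|{\langle x_n, T^2 x_n\rangle}_{A}| \leq w_{A}(T^2)$ and $|{\langle S^2 x_n, x_n\rangle}_{A}| \leq w_{A}(S^2)$, together with Proposition \ref{P.11} and the inequality $ab \leq \max\{a^2,b^2\}$, squeeze
\[
\max\big\{w^4_{A}(T), w^4_{A}(S)\big\} \leq w_{A}(T^2)\, w_{A}(S^2) \leq w^2_{A}(T)\, w^2_{A}(S) \leq \max\big\{w^4_{A}(T), w^4_{A}(S)\big\},
\]
so every inequality is an equality. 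These equalities recover $w_{A}(T) = w_{A}(S)$, $w_{A}(T^2) = w^2_{A}(T)$, $w_{A}(S^2) = w^2_{A}(S)$, and show that $\{x_n\}$ realizes the hypothesis (ii) of Theorem \ref{T.12} for the pair $T^2, S^2$; that theorem then gives $w_{A}(T^2 + S^2) = w_{A}(T^2) + w_{A}(S^2) = 2w^2_{A}(T) = 2\max\big\{w^2_{A}(T), w^2_{A}(S)\big\}$, which is (i).

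The computations are routine once the chain is set up; the only points requiring care are the collapse of the maximum, i.e.\ deducing $w_{A}(T) = w_{A}(S)$ from $w^2_{A}(T)\, w^2_{A}(S) = \max\big\{w^4_{A}(T), w^4_{A}(S)\big\}$, and the attendant degenerate case $w_{A}(T) = w_{A}(S) = 0$, where both sides of (i) and (ii) vanish trivially. The main conceptual step—transferring the triangle-equality characterization of Theorem \ref{T.12} verbatim to the squared operators—is immediate, since Theorem \ref{T.12} is stated for arbitrary members of $\mathbb{B}_{A}(\mathcal{H})$.
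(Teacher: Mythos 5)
Your proposal is correct and takes essentially the same route as the paper: both directions reduce to Theorem \ref{T.12} applied to $T^2$ and $S^2$, using the chain of inequalities preceding the statement (subadditivity of $w_A$, Proposition \ref{P.11}, and the arithmetic--geometric mean/maximum bounds) to force every intermediate inequality into an equality. No gaps.
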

\begin{proof}
Let $w_{A}(T^2 + S^2) = 2\max\big\{w^2_{A}(T), w^2_{A}(S)\big\}$.
From the derivation of (\ref{I.0.T.13}), we have
\begin{align*}
w_{A}(T^2) + w_{A}(S^2) = w^2_{A}(T) + w^2_{A}(S) = 2\max\big\{w^2_{A}(T), w^2_{A}(S)\big\}.
\end{align*}
Hence $w_{A}(T^2) = w_{A}(S^2) = w^2_{A}(T) = w^2_{A}(S)$. Thus
\begin{align*}
w_{A}(T^2 + S^2) = w_{A}(T^2) + w_{A}(S^2).
\end{align*}
By Theorem \ref{T.12}, there exists a sequence of $A$-unit vectors $\{x_n\}$ in $\mathcal{H}$ such that
\begin{align*}
\displaystyle{\lim_{n\rightarrow +\infty}}{\langle x_n, T^2x_n\rangle}_{A} {\langle S^2x_n, x_n\rangle}_{A}
= w_{A}(T^2)\,w_{A}(S^2).
\end{align*}
Since $w_{A}(T^2) = w_{A}(S^2) = w^2_{A}(T) = w^2_{A}(S)$, we have
\begin{align*}
\displaystyle{\lim_{n\rightarrow +\infty}}{\langle x_n, T^2x_n\rangle}_{A} {\langle S^2x_n, x_n\rangle}_{A}
= \max\big\{w^4_{A}(T), w^4_{A}(S)\big\}.
\end{align*}

Conversely, assume that there exists a sequence of $A$-unit vectors $\{x_n\}$ in $\mathcal{H}$ such that
\begin{align*}
\displaystyle{\lim_{n\rightarrow +\infty}}{\langle x_n, T^2x_n\rangle}_{A} {\langle S^2x_n, x_n\rangle}_{A}
= \max\big\{w^4_{A}(T), w^4_{A}(S)\big\}.
\end{align*}
Then
\begin{align*}
\max\big\{w^4_{A}(T), w^4_{A}(S)\big\}\leq w_{A}(T^2) w_{A}(S^2).
\end{align*}
Hence, by Proposition \ref{P.11} and the arithmetic-geometric mean inequality, we have
\begin{align*}
\max\big\{w^4_{A}(T), w^4_{A}(S)\big\}&\leq w_{A}(T^2) w_{A}(S^2)
\\&\leq w^2_{A}(T) w^2_{A}(S)
\\&\leq \frac{w^4_{A}(T) + w^4_{A}(S)}{2}
\leq \max\big\{w^4_{A}(T), w^4_{A}(S)\big\}.
\end{align*}
Thus $w_{A}(T^2) = w^2_{A}(T) = w^2_{A}(S) = w_{A}(S^2)$ and consequently,
\begin{align*}
\displaystyle{\lim_{n\rightarrow +\infty}}{\langle x_n, T^2x_n\rangle}_{A} {\langle S^2x_n, x_n\rangle}_{A}
= w_{A}(T^2) w_{A}(S^2).
\end{align*}
Again, by Theorem \ref{T.12}, we obtain
$w_{A}(T^2 + S^2) = w_{A}(T^2) + w_{A}(S^2)$, and hence
\begin{align*}
w_{A}(T^2 + S^2) = 2\max\big\{w^2_{A}(T), w^2_{A}(S)\big\}.
\end{align*}
\end{proof}
\textbf{Acknowledgement.}
Supported by a grant from Shanghai Municipal Science and Technology Commission (18590745200).
\bibliographystyle{amsplain}

\end{document}